\numberwithin{equation}{section}
\def\3bar{{|\hspace{-.02in}|\hspace{-.02in}|}}
\def\E{{\mathcal{E}}}
\def\T{{\mathcal{T}}}
\def\pT{{\partial T}}
\def\W{{\mathcal{W}}}
\def\bn{{\mathbf{n}}}
\def\bb{{\mathbf{b}}}
\newtheorem{remark}{Remark}[section]
\newtheorem{algorithm}{Primal-Dual Weak Galerkin Algorithm}[section]
\title {A New Primal-Dual Weak Galerkin Method for Elliptic Interface Problems with Low Regularity Assumptions}
\begin{document}

\author{
Waixiang Cao \thanks{School of Mathematical Sciences, Beijing Normal University, Beijing 100875, China (caowx@bnu.edu.cn). The research of Waixiang Cao was partially supported by NSFC grant No. 11871106.}
\and
Chunmei Wang \thanks{Department of Mathematics \& Statistics, Texas Tech University, Lubbock, TX 79409, USA (chunmei.wang@ttu.edu). The research of Chunmei Wang was partially supported by National Science Foundation Award DMS-1849483.}
\and
Junping Wang\thanks{Division of Mathematical
Sciences, National Science Foundation, Alexandria, VA 22314
(jwang@nsf.gov). The research of Junping Wang was supported in part by the
NSF IR/D program, while working at National Science Foundation.
However, any opinion, finding, and conclusions or recommendations
expressed in this material are those of the author and do not
necessarily reflect the views of the National Science Foundation.}}

\maketitle

\begin{abstract}
This article introduces a new primal-dual weak Galerkin (PDWG) finite element method for second order elliptic interface problems with ultra-low regularity assumptions on the exact solution and the interface and boundary data. It is proved that the PDWG method is stable and accurate with optimal order of error estimates in discrete and Sobolev norms. In particular, the error estimates are derived under the low regularity assumption of $u\in H^{\delta}(\Omega)$ for $\delta > \frac12$ for the exact solution $u$. Extensive numerical experiments are conducted to provide numerical solutions that verify the efficiency and accuracy of the new PDWG method.
\end{abstract}

\begin{keywords} primal-dual weak Galerkin, PDWG, finite element methods, elliptic interface problems, low regularity, polygonal or polyhedral partition.
\end{keywords}

\begin{AMS}
Primary, 65N30, 65N15, 65N12, 74N20; Secondary, 35B45, 35J50,
35J35
\end{AMS}

\pagestyle{myheadings}

\section{Introduction}
In this paper we are concerned with the development of a new primal-dual weak Galerkin (PDWG) finite element method for second order elliptic interface problems with low regularity assumptions on the exact solution and the interface and boundary data. To this end, let $N$ and $M$  be two positive integers and $\Omega\subset \mathbb R^d$($d = 2, 3$) is an open bounded domain with piecewise smooth Lipschitz boundary $\partial \Omega$. The domain $\Omega$ is partitioned into a set of subdomains $\{\Omega_i\}_{i=1}^N$ with piecewise smooth Lipschitz
boundary $\partial \Omega_i$ for $i=1, \cdots, N$; $\Gamma=\bigcup_{i=1}^N \partial \Omega_i\setminus \partial \Omega$ is the interface between the subdomains in the sense that
$$
\Gamma =\bigcup_{m=1}^{M} \Gamma_m,
$$
where there exist $i, j \in \{1, \cdots, N\}$ such that $\Gamma_m=\partial \Omega_i \cap\partial \Omega_j$ for $m=1, \cdots, M$.
The elliptic interface problem seeks an unknown function $u$ satisfying
\begin{align}\label{model}
-\nabla\cdot (a_i(x) \nabla u_i)+\nabla \cdot (\bb_i(x)u_i)+c_iu_i=&f_i,   &\text{in}\ \Omega_i, i=1,\cdots, N,\\
 u_i=&g_i, & \text{on}\ \partial \Omega_i\cap\partial\Omega, i=1, \cdots, N,\label{bdg}\\
[\![u]\!]_{\Gamma_m}=&\phi_m,   & \text{on}\quad   \Gamma_m, m=1, \cdots, M,\label{bdg1} \\
[\![(a \nabla u -\bb u) \cdot \bn]\!]_{\Gamma_m} =&\psi_m,   &\text{on}\quad   \Gamma_m, m=1, \cdots, M,
\label{model2}
\end{align}
where $u_i=u|_{\Omega_i}$,  $a_i=a|_{\Omega_i},\ \  \bb_i={\bf b}|_{\Omega_i},\ \ c_i=c|_{\Omega_i},$
and $[\![(a \nabla u -\bb u) \cdot \bn]\!]_{\Gamma_m}=(a_i \nabla u_i -\bb_i u_i) \cdot \bn_i+(a_j \nabla u_j -\bb_j u_j) \cdot \bn_j$ with $\bn_i$ and $\bn_j$ being the unit outward normal directions to $\partial \Omega_i\cap \Gamma_m$ and $\partial \Omega_j\cap \Gamma_m$, and $[\![u]\!]_{\Gamma_m}=u_i|_{  \Gamma_m}-u_j|_{\Gamma_m}$.
Assume the elliptic coefficients $a$, $\bb$ and $c$ are piecewise smooth with respect to the partition $\Omega=\cup_{i=1}^N \Omega_i$. We further assume that
$a(x)$ is symmetric and positive definite matrices uniformly in $\Omega$.

Elliptic interface problems arise in many applications of mathematical modeling and simulation of practical problems in science and engineering. These applications include computational electromagnetic \cite{r18, r23, r56, r55}, fluid mechanics \cite{r31}, materials science \cite{r25, r29}, and biological science \cite{r54, r15, r8}, to mention just a few. The physical solution to interface problems often possesses discontinuity and/or non-smoothness across the interfaces so that the standard numerical methods will not work at their full capacity. To address this challenge,  many finite element methods (FEMs) \cite{mwwz, r4, r10, r13, r42} and finite difference methods based on Cartesian grids \cite{r41, r40, r32, r34, gibou} have been developed for effective solving of the elliptic interface problem in the last several decades. In the classical FEMs, unstructured partitions were employed to deal with the irregularity of the domain geometry, particularly around the interface and domain boundary. The interface-fitted FEMs are based on proper formulations of the interface problem combined with finite element partitions that align well with the interface. The penalty methods or Lagrangian multiplier approaches were developed in \cite{r16, r6} by imposing the interface condition in the weak formulation. Incorporating the interface conditions into the numerical formulation has the potential of not only increasing the accuracy of the approximate solutions near the interface, but also the flexibility of allowing the use of computational grids that do not align with the physical interfaces. The discontinuous Galerkin (DG) methods \cite{r12, r17, r21, r30} were developed by using Galerkin projections and properly defined numerical fluxes to enforce the interface conditions in a weak sense. Weak Galerkin (WG) FEMs have been developed in \cite{r37, mwwz} by using discrete weak differential operators in the usual variational form of the elliptic interface problem together with a treatment of the interface condition via the boundary unknowns associated with the weak finite element approximations. The embedded or immersed FEMs \cite{r13, r19, r26, r47, r20, r27, r35, r28, r49, r50} were devised to allow the interface to cut through finite elements for problems with moving interfaces and complex topology. Consequently, structured Cartesian meshes could be used to avoid the time-consuming mesh generation process in the immersed FEMs. Recently, a Hybrid High-Order (HHO) method on unfitted meshes was designed and analyzed in \cite{burman-cutFEM} for elliptic interface problems by means of a consistent penalty method in which the curved interface is allowed to cut through the mesh cells in a general fashion.

Numerical methods in the context of finite differences for the elliptic interface problem include the ghost fluid method \cite{r14}, maximum principle preserving and explicit jump immersed interface method (IIM) \cite{r33, r48, r3, r34}, coupling interface method \cite{r11}, piecewise-polynomial interface method \cite{r9}, and matched interface and boundary (MIB) method \cite{r56, r57, r52}. For problems with non-smooth interfaces, some second order finite difference schemes have been devised in the context of the MIB framework in 2D and 3D \cite{r53, r54, r49, r50}.
Other algorithms based on various mathematical techniques for the elliptic interface problem include the integral equation method \cite{r36, r51}, the finite volume method \cite{r39}, and the virtual node method \cite{r2, r22}.

Even though successes have been achieved in the endeavor of solving elliptic interface problems, challenges remain in the search of new and efficient numerical algorithms for problems with very complicated interface geometries, and for problems with low-regularity solutions. The low-regularity of the solution is often caused by the geometric singularities of the interfaces and/or the non-smoothness of the interface data \cite{r37, r27}.

The goal of this paper is to develop a new numerical method for the elliptic interface problem \eqref{model}-\eqref{model2} which is applicable to solutions with low-regularity assumptions. This new method is devised by coupling a weak formulation of \eqref{model}-\eqref{model2} that is derivative-free on the exact solution $u$ with its dual equation, yielding a new primal-dual weak Galerkin finite element method (PDWG). Compared with the WG methods \cite{r37, mwwz}, the proposed PDWG results in a symmetric positive definite formulation which allows the use of general meshes such as hybrid meshes, polygonal and polyhedral meshes and meshes with hanging nodes. More importantly, the new PDWG method is applicable to the model problem
\eqref{model}-\eqref{model2} with rough boundary and interface data on $g_i$, $\phi_m$, and $\psi_m$. The new method thus provides an efficient numerical algorithm for elliptic interface problems under low regularity assumptions for the exact solution.

The paper is organized as follows. In Section \ref{Section-02}, we derive a weak formulation for the elliptic interface problem \eqref{model}-\eqref{model2} that is derivative-free on the solution variable. In Section \ref{Section:Hessian}, we briefly review the weak differential operators and their discrete analogies. In Section \ref{Section:WGFEM}, we describe the PDWG method for the model problem \eqref{model} based on the weak formulation (\ref{weakform}) and its dual. In Section \ref{Section:ExistenceUniqueness}, we establish the solution existence, uniqueness, and stability. In Section \ref{Section:error-equation}, we provide an error equation for the PDWG solutions. In Section \ref{Section:error-estimates}, we derive some error estimates based on various regularity assumptions on the exact solution. Finally, in Section \ref{Section:NE} we report a couple of numerical results to illustrated and verify our convergence theory.

\section{Preliminaries and Notations}\label{Section-02}
We follow the standard notations for Sobolev spaces
and norms defined on a given open and bounded domain $D\subset \mathbb{R}^d$ with
Lipschitz continuous boundary. As such, $\|\cdot\|_{s,D}$ and
$|\cdot|_{s,D}$ are used to denote the norm and seminorm in the Sobolev space
$H^s(D)$ for any $s\ge 0$. The inner product in $H^s(D)$
is denoted by $(\cdot,\cdot)_{s,D}$ for $s\ge 0$. The space $H^0(D)$ coincides with
$L^2(D)$ (i.e., the space of square integrable functions), for which the norm and the inner product are denoted as $\|\cdot \|_{D}$ and $(\cdot,\cdot)_{D}$. The space $H^{s}(D)$ for $s<0$ is defined as the dual of $H^{-s}_0(D)$ \cite{gr} through the usual $L^2(D)$ pairing. When
$D=\Omega$ or when the domain of integration is clear from the
context, we shall drop the subscript $D$ in the norm and the inner product
notation.

We introduce the following space
$$
V=\left\{v:\ \ v\in H_0^1(\Omega)\cap \prod_{i=1}^N H^{2}(\Omega_i), \ a \nabla v\in H(\text{div}; \Omega)\ \right\}.
$$
For sufficiently smooth boundary and interface data $g_i$, $\phi_m$, and $\psi_m$, the solution of the elliptic interface problem (\ref{model})-(\ref{model2}) satisfies the following weak formulation:
\begin{equation}\label{weakform}
 (u, {\cal L}(v)-\bb \cdot \nabla v+cv) =\phi(v), \qquad \forall v\in V,
\end{equation}
where ${\cal L}(v)|_{\Omega_i}=-\nabla \cdot (a_i(x) \nabla v|_{\Omega_i})$ and
$$
\phi(v)=(f, v)- \sum_{i=1}^N \langle g_i, a_i\nabla v\cdot \bn_i \rangle_{\partial \Omega_i \cap \partial \Omega} -\sum_{m=1}^M \langle \phi_m, a_i\nabla v\cdot \bn_m \rangle_{\Gamma_m}+ \sum_{m=1}^M\langle \psi_m, v \rangle_{\Gamma_m}.
$$
Here $f|_{\Omega_i}=f_i$, $\bn_i$ in the term $\sum_{i=1}^N \langle g_i, a_i\nabla v\cdot \bn_i \rangle_{\partial \Omega_i \cap \partial \Omega} $ is the unit outward normal direction to the boundary $\partial \Omega_i \cap \partial \Omega$ for $i=1, \cdots, N$. The unit vector $\bn_m$ is normal to the interface $\Gamma_m$ and has a direction consistent with the interface condition \eqref{model2}.
The weak form \eqref{weakform} can be derived by testing \eqref{model} against any $v\in V$ followed by twice use of the divergence theorem.
For boundary and interface data that are not smooth (e.g., for $\phi_m, \psi_m\in L^2(\Gamma_m)$ and $g_i\in L^2(\partial\Omega_i\cap\partial\Omega)$), the elliptic interface problem may not possess a strong solution satisfying (\ref{model})-(\ref{model2}) in the classical sense, but it may have a solution with low-regularity that satisfies the weak form \eqref{weakform}.

\begin{definition}
A function $u\in L^2(\Omega)$ is said to be a weak solution of the elliptic interface problem
(\ref{model})-(\ref{model2}) if it satisfies \eqref{weakform}.
\end{definition}

The dual or adjoint problem to \eqref{weakform} seeks an unknown function $\lambda$ such that
\begin{equation}\label{EQ:dualform}
(w, {\cal L}(\lambda)-\bb \cdot \nabla \lambda+c\lambda) =\chi(w), \qquad \forall w\in H^\epsilon(\Omega),
\end{equation}
where $\chi$ is a given functional in $H^\epsilon(\Omega)$.
In the rest of the paper, we assume that the dual or adjoint problem \eqref{EQ:dualform} has one and only one solution in $H^{2-\epsilon}(\Omega)$ with the following regularity estimate
  \begin{equation}\label{EQ:H2Regularity}
  \|\lambda\|_{2-\epsilon}\le C\|\chi\|_{-\epsilon}.
  \end{equation}
This regularity assumption implies that when $\chi\equiv 0$, the dual problem \eqref{EQ:dualform} has only the trivial solution $\lambda\equiv 0$. We point out that the adjoint problem is a regular second order elliptic problem involving no interfaces at all.

The weak variational problem (or primal equation) \eqref{weakform} and its dual form \eqref{EQ:dualform} are seemingly unrelated to each other in the continuous case. However, they are strongly connected and support each other in the context of the weak Galerkin approach for each of them. The rest of the paper will reveal this connection and show how they support each other and jointly provide an efficient numerical method for the elliptic interface problem (\ref{model})-(\ref{model2}).

\section{Weak Differential Operators}\label{Section:Hessian}
The two principal differential operators in the weak formulation (\ref{weakform}) for the second order elliptic interface problem (\ref{model}) are ${\cal L}$ and the gradient operator $\nabla$.  The discrete weak version for ${\cal L}$ and $\nabla$ has been introduced in \cite{wwconvdiff, wy3655}. For completeness, we shall briefly review their definition in this section.

Let $T$ be a polygonal or polyhedral domain with boundary $\partial T$. A weak function on $T$ refers to a triplet $\sigma=\{\sigma_0,\sigma_b, \sigma_n\}$ with $\sigma_0\in L^2(T)$, $\sigma_b\in L^{2}(\partial T)$ and $\sigma_n\in L^{2}(\partial T)$. Here $\sigma_0$ and $\sigma_b$ are used to represent the value of $\sigma$ in the interior and on the boundary of $T$ and $\sigma_n$ is reserved for the value of $a\nabla \sigma \cdot \bn$ on $\partial T$. Note that $\sigma_b$ and $\sigma_n$ may not necessarily be the trace of $\sigma_0$ and $a\nabla \sigma_0  \cdot \bn$ on $\partial T$, respectively. Denote by $\W(T)$ the space of weak functions on $T$:
\begin{equation}\label{2.1}
\W(T)=\{\sigma=\{\sigma_0,\sigma_b, \sigma_n \}: \sigma_0\in L^2(T), \sigma_b\in
L^{2}(\partial T), \sigma_n\in L^{2}(\partial T)\}.
\end{equation}

The weak action of ${\cal L}=-\nabla \cdot (a\nabla)$ on $\sigma\in \W(T)$, denoted by ${\cal L}_w \sigma$, is defined as a linear functional on $H^2(T)$ such that
\begin{equation*}
({\cal L}_w \sigma, w)_T= (\sigma_0, {\cal L} w)_T+\langle \sigma_b, a \nabla w\cdot
\textbf{n}\rangle_{\partial T}- \langle \sigma_n,w\rangle_{\partial T},
\end{equation*}
for all $w \in  H^2(T)$.

The weak gradient of $\sigma\in \W(T)$, denoted by $\nabla_w \sigma$, is defined as a linear functional on $[H^1(T)]^d$ such that
\begin{equation*}
(\nabla_w  \sigma,\boldsymbol{\psi})_T=-(\sigma_0,\nabla \cdot \boldsymbol{\psi})_T+\langle \sigma_b,\boldsymbol{\psi}\cdot \textbf{n}\rangle_{\partial T},
\end{equation*}
for all $\boldsymbol{\psi}\in [H^1(T)]^d$.

Denote by $P_r(T)$ the space of polynomials on $T$ with degree no more than $r$. A discrete version of ${\cal L}_w  \sigma$ for $\sigma\in \W(T)$, denoted by ${\cal L}_{w, r, T} \sigma$, is defined as the unique polynomial in $P_r(T)$ satisfying
\begin{equation}\label{Loperator1-1}
({\cal L}_{w, r, T} \sigma, w)_T= (\sigma_0, {\cal L} w)_T+\langle \sigma_b, a \nabla w\cdot
\textbf{n}\rangle_{\partial T}- \langle \sigma_n, w\rangle_{\partial T}, \quad\forall w \in P_r(T),
\end{equation}
which, from the usual integration by parts, gives
\begin{equation}\label{Loperator1-2}
({\cal L}_{w, r, T} \sigma, w)_T= ({\cal L}\sigma_0, w)_T-\langle \sigma_0-\sigma_b, a \nabla w\cdot
\textbf{n}\rangle_{\partial T}+ \langle a\nabla \sigma_0\cdot \bn-\sigma_n, w \rangle_{\partial T},
\end{equation}
for all $w \in P_r(T)$, provided that $\sigma_0\in H^2(T)$.

A discrete version of $\nabla_{w}\sigma$  for $\sigma\in \W(T)$, denoted by $\nabla_{w, r, T}\sigma$, is defined as a unique polynomial vector in $[P_r(T) ]^d$ satisfying
\begin{equation}\label{disgradient}
(\nabla_{w, r, T} \sigma, \boldsymbol{\psi})_T=-(\sigma_0, \nabla \cdot \boldsymbol{\psi})_T+\langle \sigma_b, \boldsymbol{\psi} \cdot \textbf{n}\rangle_{\partial T}, \quad\forall\boldsymbol{\psi}\in [P_r(T)]^d,
\end{equation}
which, from the usual integration by parts, gives
\begin{equation}\label{disgradient*}
(\nabla_{w, r, T} \sigma, \boldsymbol{\psi})_T= (\nabla \sigma_0, \boldsymbol{\psi})_T-\langle \sigma_0- \sigma_b, \boldsymbol{\psi} \cdot \textbf{n}\rangle_{\partial T}, \quad\forall\boldsymbol{\psi}\in [P_r(T)]^d,
\end{equation}
provided that $\sigma_0\in H^1(T)$.

\section{Primal-Dual Weak Galerkin Algorithm}\label{Section:WGFEM}
Let ${\cal T}_h$ be a finite element partition of the domain $\Omega$ consisting of
polygons or polyhedra that are shape-regular \cite{wy3655}. Assume that the edges/faces of the elements in ${\cal T}_h$ align with the interface $\Gamma$.
The partition ${\cal T}_h$ can be grouped into $N$ sets of elements denoted by ${\cal T}_h^{i}={\cal T}_h \cap \Omega_i$, so that each ${\cal T}_h^i$ provides a finite element partition for the subdomain $\Omega_i$ for $i=1, \cdots, N$.
The intersection of the partition ${\cal T}_h$ also introduces a finite element partition for the interface $\Gamma$, denoted by $\Gamma_h$.
Denote by ${\mathcal E}_h$ the
set of all edges or flat faces in ${\cal T}_h$ and  ${\mathcal
E}_h^0={\mathcal E}_h \setminus
\partial\Omega$ the set of all interior edges or flat faces.
Denote by $h_T$ the meshsize of $T\in {\cal T}_h$ and
$h=\max_{T\in {\cal T}_h}h_T$ the meshsize for the partition
${\cal T}_h$.

For any given integer $k\geq 1$, denote by
$W_k(T)$ the local discrete space of  the weak functions given by
$$
W_k(T)=\{\{\sigma_0,\sigma_b, \sigma_n\}:\sigma_0\in P_k(T),\sigma_b\in
P_k(e), \sigma_n \in
P_{k-1}(e),e\subset \partial T\}.
$$
Patching $W_k(T)$ over all the elements $T\in {\cal T}_h$
through a common value $\sigma_b$ on the interior interface $\E_h^0$, we arrive at the following
weak finite element space $W_h$:
$$
W_h=\big\{\{\sigma_0, \sigma_b, \sigma_n\}:\{\sigma_0, \sigma_b,  \sigma_n\}|_T\in W_k(T), \forall T\in {\cal T}_h \big\}.
$$
Note that $\sigma_n$ has two values $\sigma_n^L$ and $\sigma_n^R$ satisfying $\sigma_n^L+\sigma_n^R=0$ on each interior interface $e=\partial T_L\cap \partial T_R \in {\cal E}_h^0$ as seen from the two elements $T_L$ and $T_R$. Denote by $W_h^0$ the subspace of $W_h$ with homogeneous boundary values; i.e.,
\begin{equation*}\label{wh0}
 W_h^0=\{\{\sigma_0,\sigma_b, \sigma_n\} \in W_h:\  \sigma_b|_{\partial\Omega}=0 \}.
\end{equation*}
Denote by $M_h$ the finite element space consisting of piecewise polynomials of degree $k-1$; i.e.,
\begin{equation*}\label{mh}
M_h=\{w: w|_T\in P_{k-1}(T),  \forall T\in {\cal T}_h\}.
\end{equation*}

For simplicity of notation and without confusion, for any $\sigma\in
W_h$, denote by ${\cal L}_{w} \sigma$ and $\nabla_{w}\sigma$  the discrete weak actions ${\cal L}_{w, k-1, T} \sigma$ and $\nabla_{w, k-1, T}\sigma$ computed by using (\ref{Loperator1-1}) and (\ref{disgradient}) on each element $T$; i.e.,
 $$
({\cal L}_{w} \sigma)|_T={\cal L}_{w, k-1, T}(\sigma|_T), \qquad \sigma\in W_h,
$$
$$
(\nabla_{w}\sigma)|_T= \nabla_{w, k-1, T}(\sigma|_T), \qquad \sigma\in W_h.
$$

For any $\sigma, \lambda\in W_h$ and $u\in M_h$, we introduce the
following bilinear forms
\begin{align} \label{EQ:local-stabilizer}
s(\sigma, \lambda)=&\sum_{T\in {\cal T}_h}s_T(\sigma, \lambda),
\\
b(u, \lambda)=&\sum_{T\in {\cal T}_h}b_T(u, \lambda),  \label{EQ:local-bterm}
\end{align}
where
\begin{equation*}
\begin{split}
s_T(\sigma, \lambda)=&h_T^{-3}\langle \sigma_0-\sigma_b, \lambda_0-\lambda_b\rangle_{\partial T}+  h_T^{-1}\langle a \nabla \sigma_0 \cdot \bn-\sigma_n, a \nabla \lambda_0 \cdot \bn-\lambda_n\rangle_{\partial T}\\
&+\tau (\sigma_0, \lambda_0)_T,\\
b_T(u, \lambda)=&(u, {\cal L}_w \lambda-\bb \cdot \nabla_w \lambda+c\lambda_0)_T,
\end{split}
\end{equation*}
with $\tau >0$ being a parameter.

The following is the primal-dual weak Galerkin scheme for the second order elliptic interface problem (\ref{model}) based on the variational formulation (\ref{weakform}).
\begin{algorithm}
Find $(u_h;\lambda_h)\in M_h \times W_{h}^0$, such that
\begin{eqnarray}\label{32}
s(\lambda_h, \sigma)+b(u_h, \sigma)&=& \phi_h(\sigma), \qquad \forall \sigma\in W_{h}^0,\\
b(w, \lambda_h)&=&0,\qquad \quad \qquad \forall w\in M_h.\label{2}
\end{eqnarray}
\end{algorithm}
Here $\phi_h(\sigma)=(f, \sigma_0)-\sum_{i=1}^N \langle g_i, \sigma_n \rangle_{\partial \Omega_i \cap \partial \Omega}-\sum_{m=1}^M \langle \phi_m, \sigma_n  \rangle_{\Gamma_m}+\sum_{m=1}^M\langle \psi_m, \sigma_b  \rangle_{\Gamma_m}$.

\section{Stability Analysis}\label{Section:ExistenceUniqueness}
Let $Q_0$ be the $L^2$ projection operator onto $P_k(T)$, $k\geq 1$. Analogously, denote by $Q_b$ and $Q_n$ the $L^2$ projection operators onto $P_{k}(e)$ and $P_{k-1}(e)$, respectively, for $e\subset\partial T$. For $w\in H^1(\Omega)$, define the $L^2$ projection $Q_h w\in W_h$ as follows
$$
Q_hw|_T=\{Q_0w,Q_bw, Q_n( a\nabla w \cdot \bn)\}.
$$
The $L^2$ projection operator onto the finite element space $M_h$ is denoted as ${\cal Q}_h^{k-1}$.

For simplicity, we assume the coefficient coefficients $a(x)$, $\bb(x)$ and $c(x)$ are piecewise constants with respect to the finite element partition ${\cal T}_h$. The analysis can be extended to piecewise smooth coefficients $a(x)$, $\bb(x)$ and $c(x)$ without any difficulty.

\begin{lemma}\label{Lemma5.1} \cite{wwconvdiff, wy3655} The operators $Q_h$ and ${\cal Q}^{k-1}_h$ satisfy the following commutative properties:
\begin{eqnarray}\label{l}
{\cal L}_{w}(Q_h w) &=& {\cal Q}_h^{k-1}( {\cal L} w), \qquad  \forall  w\in H^2(T),\\
\nabla_{w}(Q_h w) &=& {\cal Q}^{k-1}_h(\nabla w),  \qquad  \forall  w\in H^1(T). \label{l-2}
\end{eqnarray}
\end{lemma}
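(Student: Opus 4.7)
The plan is to verify the two commutative identities element-by-element and test-function-by-test-function, using the defining properties of the discrete weak operators together with the orthogonality of the $L^2$ projections $Q_0$, $Q_b$, $Q_n$.

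For the identity $\mathcal{L}_w(Q_h w) = \mathcal{Q}_h^{k-1}(\mathcal{L}w)$, I would fix an arbitrary test polynomial $\varphi \in P_{k-1}(T)$ and start from the definition \eqref{Loperator1-1} applied to $\sigma = Q_h w$, which gives
\begin{equation*}
(\mathcal{L}_w(Q_h w), \varphi)_T = (Q_0 w, \mathcal{L}\varphi)_T + \langle Q_b w, a\nabla\varphi\cdot\bn\rangle_{\partial T} - \langle Q_n(a\nabla w\cdot\bn),\varphi\rangle_{\partial T}.
\end{equation*}
The key observation is that $\mathcal{L}\varphi \in P_{k-2}(T)$, $a\nabla\varphi\cdot\bn|_e \in P_{k-1}(e)$, and $\varphi|_e \in P_{k-1}(e)$, so each of the three projections $Q_0$, $Q_b$, $Q_n$ is acting against a function that already lies in the corresponding polynomial space. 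By the defining orthogonality of the $L^2$ projections, each projection can simply be dropped, yielding
\begin{equation*}
(\mathcal{L}_w(Q_h w), \varphi)_T = (w, \mathcal{L}\varphi)_T + \langle w, a\nabla\varphi\cdot\bn\rangle_{\partial T} - \langle a\nabla w\cdot\bn, \varphi\rangle_{\partial T}.
\end{equation*}
Two applications of the integration-by-parts (Green's) formula collapse the right-hand side to $(\mathcal{L}w,\varphi)_T$, and by the definition of $\mathcal{Q}_h^{k-1}$ this equals $(\mathcal{Q}_h^{k-1}(\mathcal{L}w),\varphi)_T$. Since $\mathcal{L}_w(Q_h w)\in P_{k-1}(T)$ and the identity holds for all $\varphi\in P_{k-1}(T)$, the first commutative property follows.

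The second identity $\nabla_w(Q_h w) = \mathcal{Q}_h^{k-1}(\nabla w)$ is proved in exactly the same spirit, starting from \eqref{disgradient} with test vector $\boldsymbol{\psi}\in [P_{k-1}(T)]^d$. Here $\nabla\cdot\boldsymbol{\psi}\in P_{k-2}(T)$ and $\boldsymbol{\psi}\cdot\bn|_e\in P_{k-1}(e)$, so the projections $Q_0$ and $Q_b$ in $(Q_0 w, \nabla\cdot\boldsymbol{\psi})_T$ and $\langle Q_b w,\boldsymbol{\psi}\cdot\bn\rangle_{\partial T}$ may be removed; a single integration by parts then produces $(\nabla w,\boldsymbol{\psi})_T = (\mathcal{Q}_h^{k-1}(\nabla w),\boldsymbol{\psi})_T$, and uniqueness of the discrete weak gradient in $[P_{k-1}(T)]^d$ delivers the claim.

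There is really no serious obstacle to overcome: the proof reduces to checking that every polynomial test object appearing in the defining relations \eqref{Loperator1-1} and \eqref{disgradient} lies in the range of the relevant projector, so that the projections can be erased, after which the standard integration-by-parts formulas close the argument. The only subtle point worth flagging is making sure the regularity hypothesis $w\in H^2(T)$ (respectively $w\in H^1(T)$) is sufficient to justify the boundary term $a\nabla w\cdot\bn$ on $\partial T$ (respectively the trace of $w$), so that the projections $Q_n(a\nabla w\cdot\bn)$ and $Q_b w$ are well defined; this is why the two statements of the lemma are separated into the $H^2$ and $H^1$ cases.
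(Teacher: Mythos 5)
The paper does not prove Lemma \ref{Lemma5.1} at all --- it is quoted from \cite{wwconvdiff, wy3655} --- and your argument is precisely the standard one from those references: test against $P_{k-1}$ objects, erase the projections $Q_0$, $Q_b$, $Q_n$ because the test quantities already lie in their ranges, and close with integration by parts. Your proof is correct; the only caveat worth adding is that dropping the projections in the terms $(Q_0w,\mathcal{L}\varphi)_T$ and $\langle Q_bw, a\nabla\varphi\cdot\bn\rangle_{\partial T}$ requires $\mathcal{L}\varphi$ and $a\nabla\varphi\cdot\bn$ to be polynomials of the right degree, which uses the paper's standing assumption (stated at the start of Section \ref{Section:ExistenceUniqueness}) that $a$ is piecewise constant on $\T_h$ and that the faces are flat, a hypothesis you invoke implicitly in your degree counts.
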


The stabilizer $s(\cdot, \cdot)$ given in (\ref{EQ:local-stabilizer}) naturally induces the following semi-norm in the weak finite element space $W_{h}$
\begin{equation}\label{norm-new}
\3bar \rho  \3bar_w=
s(\rho, \rho)^{\frac{1}{2}},\qquad  \rho\in W_{h}.
\end{equation}

\begin{lemma}\label{lemmanorm}
The semi-norm $\3bar \cdot \3bar_w$ given in (\ref{norm-new}) defines a norm in the linear space $W_{h}^0$.
\end{lemma}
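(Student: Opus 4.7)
The plan is to verify the single non-trivial axiom for a norm, namely positive definiteness: given $\rho=\{\rho_0,\rho_b,\rho_n\}\in W_h^0$ with $\3bar\rho\3bar_w=0$, I must show $\rho_0\equiv 0$, $\rho_b\equiv 0$, and $\rho_n\equiv 0$. Non-negativity, homogeneity, and the triangle inequality are immediate from the fact that $s(\cdot,\cdot)$ is a sum of three non-negative quadratic terms, each of which is manifestly a squared $L^2$-type seminorm times a non-negative weight.

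The key observation, and the reason the proof will be short, is that the stabilizer $s_T(\cdot,\cdot)$ already contains the volumetric term $\tau(\rho_0,\rho_0)_T$ with $\tau>0$. So from $\3bar\rho\3bar_w=0$ I will first conclude
\begin{equation*}
\tau\sum_{T\in\T_h}(\rho_0,\rho_0)_T=0,
\end{equation*}
which forces $\rho_0\equiv 0$ on every $T\in\T_h$. This is the step that makes the argument work without any inverse inequality or Poincar\'e-type estimate, and without needing to invoke the boundary condition in $W_h^0$ in a serious way.

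Once $\rho_0\equiv 0$ is in hand, I will use the remaining two summands of $s_T(\rho,\rho)$. The condition $h_T^{-3}\|\rho_0-\rho_b\|^2_{\partial T}=0$ combined with $\rho_0=0$ gives $\rho_b=0$ on $\partial T$, and the condition $h_T^{-1}\|a\nabla\rho_0\cdot\bn-\rho_n\|^2_{\partial T}=0$ combined with $\nabla\rho_0=0$ gives $\rho_n=0$ on $\partial T$. Doing this on every $T\in\T_h$ shows $\rho\equiv 0$ in $W_h^0$, which completes the verification that $\3bar\cdot\3bar_w$ is a norm.

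There is no real obstacle here; the statement is essentially a bookkeeping check enabled by the presence of the $\tau(\rho_0,\rho_0)_T$ term. The only point worth commenting on in the write-up is that the subspace $W_h^0$ plays no role in this particular argument (the stabilizer would be a norm on all of $W_h$ under $\tau>0$), but it is natural to record the statement for $W_h^0$ since that is the space used by Algorithm~\ref{32}.
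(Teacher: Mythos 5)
Your proof is correct and takes essentially the same route as the paper's: both first extract $\rho_0\equiv 0$ from the volumetric term $\tau(\rho_0,\rho_0)_T$ and then deduce $\rho_b=0$ and $\rho_n=0$ from the vanishing of the two boundary terms of the stabilizer. Your side remark that the argument actually yields a norm on all of $W_h$ (the boundary condition in $W_h^0$ is not used) is also accurate.
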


\begin{proof}
It suffices to verify the positivity property for $\3bar\cdot\3bar_w$. Assume $\3bar \rho \3bar_w=0$ for some $\rho\in W_{h}^0$. It follows that $\rho_0=0$ on each element $T$, $\rho_0=\rho_b$ and $a\nabla \rho_0 \cdot \bn =\rho_n$ on each $\partial T$. We thus obtain $\rho_0\in C^0(\Omega )$ and further $\rho
_0\equiv 0$ in $\Omega$. Using $\rho_0=\rho_b$ and $a\nabla \rho_0 \cdot \bn =\rho_n$ on each $\partial T$ gives $\rho
_b\equiv 0$, $\rho
_n\equiv 0$, and further $\rho \equiv 0$ in $\Omega$.  This completes the proof of the lemma.
\end{proof}

Consider the auxiliary problem of seeking $\Phi$ such that
\begin{eqnarray}\label{pro-new}
{\cal L}(\Phi)- \bb \cdot\nabla \Phi+c\Phi & = &\psi, \qquad \text{in}\ \Omega,\\
\Phi & = & 0,  \qquad\text{on}\  \partial\Omega,\label{pro0-new}
\end{eqnarray}
where $\psi\in L^2(\Omega)$ is a given function. Assume that the problem (\ref{pro-new})-(\ref{pro0-new}) has a solution $\Phi\in \prod_{i=1}^N H^{1+\gamma}(\Omega_i)$ satisfying
\begin{equation}\label{regu2}
\left(\sum_{i=1}^N \|\Phi\|^2_{1+\gamma, \Omega_i}\right)^{\frac{1}{2}}\leq C\|\psi\|_{\gamma-1},
\end{equation}
with some parameter $\gamma\in (\frac{1}{2}, 1]$.

\begin{lemma}\label{lem3-new} ({\it inf-sup} condition)  Under the assumption of \eqref{regu2},
there exists a constant $\beta>0$ independent of the meshsize $h$ such that
\begin{equation}\label{EQ:inf-sup-condition-02}
\sup_{0\neq \sigma \in W_h^0} \frac{b(v, \sigma)}{\3bar \sigma \3bar_w}\geq \beta h^{1-\gamma}\|v\|_{1-\gamma}, \qquad \forall v\in M_h.
\end{equation}
\end{lemma}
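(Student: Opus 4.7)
\medskip
\noindent\textbf{Proof plan.}
Given $v\in M_h$, the strategy is to exhibit a single test function $\sigma\in W_h^0$ for which $b(v,\sigma)$ is large while $\3bar\sigma\3bar_w$ is controlled. The natural candidate comes from the auxiliary problem \eqref{pro-new}--\eqref{pro0-new}. First, by duality one can select $\psi\in L^2(\Omega)$ (which is dense in $H^{\gamma-1}$, and since $1-\gamma\in[0,\tfrac12)$ the norm $\|v\|_{1-\gamma}$ on piecewise polynomials is well defined) with $\|\psi\|_{\gamma-1}\le C\|v\|_{1-\gamma}$ and $(v,\psi)\ge c\|v\|_{1-\gamma}^2$. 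Let $\Phi$ be the solution of \eqref{pro-new}--\eqref{pro0-new} with this right-hand side; by assumption \eqref{regu2}, $\Phi\in\prod_i H^{1+\gamma}(\Omega_i)$ with
$\bigl(\sum_i\|\Phi\|_{1+\gamma,\Omega_i}^2\bigr)^{1/2}\le C\|v\|_{1-\gamma}$. Set $\sigma:=Q_h\Phi$; since $\Phi|_{\partial\Omega}=0$, we have $Q_b\Phi|_{\partial\Omega}=0$ and hence $\sigma\in W_h^0$.

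The next step is to identify $b(v,\sigma)$ with the continuous pairing $(v,\psi)$. On each $T\in\mathcal T_h$, I would apply the definition \eqref{Loperator1-1} of $\mathcal L_{w,k-1,T}$ directly with test polynomial $w\in P_{k-1}(T)$: using the $L^2$-projection identities for $Q_0,Q_b,Q_n$ (whose ranges include $\mathcal Lw$, $a\nabla w\cdot\bn$, and $w|_e$), the right-hand side of \eqref{Loperator1-1} collapses to $(\Phi,\mathcal Lw)_T+\langle\Phi,a\nabla w\cdot\bn\rangle_{\partial T}-\langle a\nabla\Phi\cdot\bn,w\rangle_{\partial T}$. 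A piecewise integration by parts, valid because $a\nabla\Phi\in H(\mathrm{div};T)$ (as $\mathcal L\Phi=\psi+\bb\cdot\nabla\Phi-c\Phi\in L^2$ on each $T$), shows this equals $(\mathcal L\Phi,w)_T$. This yields the commutativity $\mathcal L_wQ_h\Phi=\mathcal Q_h^{k-1}(\mathcal L\Phi)$, and an analogous (easier) argument gives $\nabla_wQ_h\Phi=\mathcal Q_h^{k-1}(\nabla\Phi)$. Since $v\in M_h$ is a piecewise polynomial of degree $k-1$ and $\bb,c$ are piecewise constant, dropping the projections against $v$ gives
\begin{equation*}
b(v,Q_h\Phi)=(v,\mathcal L\Phi-\bb\cdot\nabla\Phi+c\Phi)=(v,\psi)\ge c\,\|v\|_{1-\gamma}^2.
\end{equation*}

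It remains to bound $\3bar Q_h\Phi\3bar_w$ by $Ch^{\gamma-1}\|v\|_{1-\gamma}$. For the face terms I would use the optimality of the projections $Q_b,Q_n$ to reduce everything to element-wise errors of $Q_0$, and then apply the standard approximation-plus-trace inequalities on shape-regular elements:
\begin{align*}
\|Q_0\Phi-Q_b\Phi\|_{\partial T}&\le C h_T^{\gamma+1/2}\|\Phi\|_{1+\gamma,T},\\
\|a\nabla Q_0\Phi\cdot\bn-Q_n(a\nabla\Phi\cdot\bn)\|_{\partial T}&\le Ch_T^{\gamma-1/2}\|\Phi\|_{1+\gamma,T},
\end{align*}
so the weighted face contributions $h_T^{-3}\|\cdot\|^2$ and $h_T^{-1}\|\cdot\|^2$ both scale like $h_T^{2\gamma-2}\|\Phi\|_{1+\gamma,T}^2$. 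The volumetric $\tau\|Q_0\Phi\|_T^2$ contribution is absorbed since $h^{2\gamma-2}\ge 1$. Summing over $T$ and invoking the regularity estimate \eqref{regu2} gives $\3bar Q_h\Phi\3bar_w\le Ch^{\gamma-1}\|\psi\|_{\gamma-1}\le Ch^{\gamma-1}\|v\|_{1-\gamma}$. Combining with the lower bound on $b(v,Q_h\Phi)$ yields the desired inf-sup constant $\beta>0$.

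\medskip
\noindent\textbf{Main obstacle.} The delicate point is the commutativity step when $\Phi$ only lies in $H^{1+\gamma}$ with $\gamma\in(\tfrac12,1]$ rather than $H^2$, so Lemma~\ref{Lemma5.1} cannot be cited verbatim. The remedy is to work directly from the defining identity \eqref{Loperator1-1} and to justify the two successive integrations by parts using only the facts that $\Phi\in H^{1+\gamma}(T)$ (so its trace and normal trace on $\partial T$ are well defined) and that $a\nabla\Phi\in H(\mathrm{div};T)$ (from the PDE). Everything else is standard approximation theory plus the duality characterization of $\|v\|_{1-\gamma}$, which is legitimate because $\gamma>\tfrac12$ places $1-\gamma$ below the trace threshold, allowing discontinuous piecewise polynomials of $M_h$ to have finite $H^{1-\gamma}(\Omega)$ norm.
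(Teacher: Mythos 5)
Your proposal is correct and follows essentially the same route as the paper: test with $\sigma=Q_h\Phi$ where $\Phi$ solves the auxiliary problem \eqref{pro-new}--\eqref{pro0-new}, use the commutativity of $Q_h$ with ${\cal L}_w$ and $\nabla_w$ to identify $b(v,Q_h\Phi)=(v,\psi)$, bound $\3bar Q_h\Phi\3bar_w\le Ch^{\gamma-1}\|\psi\|_{\gamma-1}$ via the trace and approximation estimates, and conclude by the duality between $H^{\gamma-1}$ and $H^{1-\gamma}$. Your only deviations are cosmetic (fixing a single near-optimal $\psi$ up front instead of taking the supremum over $\psi$ at the end, as the paper does) or a refinement in rigor (justifying the commutativity directly from \eqref{Loperator1-1} when $\Phi$ only lies in $H^{1+\gamma}$ with $\gamma<1$, whereas the paper simply cites Lemma~\ref{Lemma5.1}, which is stated for $H^2$ functions).
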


\begin{proof} Let $\Phi$ be the solution of (\ref{pro-new})-(\ref{pro0-new}) satisfying \eqref{regu2}. By letting $\sigma=Q_h\Phi$, we have from Lemma \ref{Lemma5.1} and (\ref{pro-new}) that
\begin{equation}\label{EQ:April05:100}
\begin{split}
b(v,\sigma) = & \sum_{T\in {\cal T}_h}(v, {\cal L}_w (Q_h\Phi)-\bb \cdot \nabla_w Q_h\Phi+cQ_0\Phi)_T \\
= & \sum_{T\in {\cal T}_h}(v, {\cal Q}^{k-1}_h({\cal L}\Phi)-\bb \cdot {\cal Q}_h^{k-1} (\nabla \Phi)+cQ_0\Phi)_T \\
= & \sum_{T\in {\cal T}_h}(v, {\cal L}\Phi-\bb \cdot \nabla \Phi+c\Phi)_T
= \sum_{T\in {\cal T}_h}(v, \psi)_T
\end{split}
\end{equation}
for all $v\in M_h$. From the trace inequality (\ref{trace-inequality}), the estimate \eqref{3.1} with $m=\gamma$, and the estimate (\ref{regu2}), there holds
\begin{equation}\label{EQ:Estimate:002}
\begin{split}
 \sum_{T\in {\cal T}_h }h_T^{-3}\int_{\partial
T}|\sigma_0-\sigma_b|^2ds=& \sum_{T\in {\cal T}_h}
h_T^{-3}\int_{\partial T}|Q_0\Phi-Q_b \Phi|^2ds\\
\leq &\sum_{T\in {\cal T}_h} h_T^{-3}\int_{\partial T}|Q_0\Phi-\Phi|^2ds\\
\leq & \sum_{T\in {\cal T}_h} C\{h_T^{-4}\|Q_0\Phi-\Phi\|^2_T
+h_T^{2\gamma-4}\| Q_0\Phi-  \Phi\|^2_{\gamma, T}\}\\
\leq &Ch^{2\gamma-2}\sum_{i=1}^N \|\Phi\|^2_{1+\gamma, \Omega_i}
\leq Ch^{2\gamma-2}\|\psi\|^2_{\gamma-1}.
 \end{split}
\end{equation}
A similar analysis can be applied to yield the following estimate:
\begin{equation}\label{EQ:Estimate:003}
\begin{split}
\sum_{T\in {\cal T}_h }h_T^{-1}\int_{\partial T}|a \nabla
\sigma_0 \cdot \bn-\boldsymbol{\sigma}_n|^2ds \leq & Ch^{2\gamma-2}\|\psi\|^2_{\gamma-1}.
 \end{split}
\end{equation}
Furthermore, by letting $\sigma=Q_h\Phi$ and using \eqref{regu2}, we arrive at
\begin{equation}\label{EQ:Estimate:004}
\begin{split}
\sum_{T\in {\cal T}_h }\tau\int_{T} |\sigma_0|^2dT =& \sum_{T\in {\cal T}_h }\tau\int_{T} |Q_0\Phi|^2dT\\
\leq & C\sum_{i=1}^N \|\Phi\|^2_{1+\gamma, \Omega_i}
\leq  C \|\psi\|^2_{\gamma-1}. \end{split}
\end{equation}
Combining the estimates (\ref{EQ:Estimate:002})-(\ref{EQ:Estimate:004}) and then using the definition of $\3bar \sigma \3bar_w$, we obtain
\begin{equation}\label{barv}
\3bar \sigma\3bar_w^2  \leq  Ch^{2\gamma-2}\|\psi\|^2_{\gamma-1}.
\end{equation}
Thus, it follows from \eqref{EQ:April05:100} and \eqref{barv} that
\begin{equation*}
\begin{split}
\sup_{0\neq \sigma \in W_h^0} \frac{b(v, \sigma)}{\3bar \sigma \3bar_w}\geq & \sup_{0\neq \sigma=Q_h\Phi \in W_h^0} \frac{b(v, \sigma)}{\3bar \sigma \3bar_w}\\
\geq &\sup_{0\neq \psi \in L^2(\Omega)} \frac{\sum_{T\in {\cal T}_h}(v, \psi)_T}{\3bar Q_h\Phi \3bar_w}\\
\geq &\sup_{0\neq \psi \in L^2(\Omega)} \frac{(v, \psi)}{Ch^{\gamma-1}\|\psi\|_{\gamma-1}}\\
\geq &  \beta  h^{1-\gamma}\|v\|_{1-\gamma},
\end{split}
\end{equation*}
for some constant $\beta$ independent of the meshsize $h$. This completes the proof of the lemma.
\end{proof}

We are now in a position to state the main result on the solution existence and uniqueness for the primal-dual weak Galerkin finite element method (\ref{32})-(\ref{2}).

\begin{theorem}
The primal-dual weak Galerkin finite element scheme (\ref{32})-(\ref{2}) has one and only one solution.
\end{theorem}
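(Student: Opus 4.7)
The plan is to exploit the square linear system structure: since (\ref{32})-(\ref{2}) is a finite-dimensional linear problem with the same number of unknowns as equations, existence is equivalent to uniqueness. Hence it suffices to show that the homogeneous version (with $\phi_h \equiv 0$ and a zero right-hand side in (\ref{2})) forces $u_h = 0$ and $\lambda_h = 0$.

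First I would test (\ref{32}) with $\sigma = \lambda_h \in W_h^0$ and test (\ref{2}) with $w = u_h \in M_h$. The first gives $s(\lambda_h,\lambda_h) + b(u_h,\lambda_h) = 0$, while the second gives $b(u_h,\lambda_h) = 0$. Subtracting yields $s(\lambda_h,\lambda_h) = 0$, i.e., $\3bar \lambda_h \3bar_w = 0$. By Lemma \ref{lemmanorm}, the seminorm $\3bar \cdot \3bar_w$ is in fact a norm on $W_h^0$, so $\lambda_h \equiv 0$.

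Next, substituting $\lambda_h \equiv 0$ back into (\ref{32}) reduces the equation to $b(u_h,\sigma) = 0$ for every $\sigma \in W_h^0$. Applying the \emph{inf-sup} condition of Lemma \ref{lem3-new} to $v = u_h \in M_h$ gives
\[
\beta h^{1-\gamma} \|u_h\|_{1-\gamma} \leq \sup_{0 \neq \sigma \in W_h^0} \frac{b(u_h,\sigma)}{\3bar \sigma \3bar_w} = 0,
\]
so $\|u_h\|_{1-\gamma} = 0$, forcing $u_h \equiv 0$. This completes the uniqueness argument and hence the existence-uniqueness result.

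The proof is essentially a routine assembly of ingredients already established, so there is no real obstacle to overcome; the only subtlety is the conceptual one of recognizing that Lemma \ref{lemmanorm} (positivity of the stabilizer on $W_h^0$) handles the dual variable $\lambda_h$, while Lemma \ref{lem3-new} (inf-sup on $b(\cdot,\cdot)$) handles the primal variable $u_h$. These two lemmas play perfectly complementary roles, which is exactly the structural feature that makes the primal-dual WG framework well-posed.
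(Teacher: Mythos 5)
Your proposal is correct and follows essentially the same route as the paper: reduce to the homogeneous square linear system, test with $\sigma=\lambda_h$ and $w=u_h$ to get $s(\lambda_h,\lambda_h)=0$ and invoke Lemma \ref{lemmanorm} to kill $\lambda_h$, then use the \emph{inf-sup} condition of Lemma \ref{lem3-new} to kill $u_h$. No gaps; this matches the paper's argument step for step.
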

\begin{proof} It is sufficient to show that the homogeneous case of (\ref{32})-(\ref{2}) has only the trivial solution. To this end, assume $f=0$, $g_i=0$ for $i=1,\cdots, N$, $\phi_m=0$ and $\psi_m=0$ for $m=1, \cdots, M$ in (\ref{32})-(\ref{2}). This implies $\phi_h(\sigma)=0$ for all $\sigma\in W_h^0$. By choosing $\sigma=\lambda_h$ and $w=u_h$ in (\ref{32})-(\ref{2}), we have
$$
s(\lambda_h, \lambda_h)=0,
$$
which gives $\lambda_h\equiv 0$ from Lemma \ref{lemmanorm}. The equation (\ref{32}) can then be rewritten as
\begin{equation}\label{rew}
b(u_h, \sigma) = 0,\qquad \forall \sigma\in W_{h}^0,
\end{equation}
which, together with Lemma \ref{lem3-new}, implies
\begin{equation*}
0=\sup_{0\neq \sigma \in W_h^0} \frac{b(u_h, \sigma) }{\3bar \sigma \3bar_w}\geq \beta h^{1-\gamma}\|u_h\|_{1-\gamma},
\end{equation*}
so that $u_h\equiv 0$ in $\Omega$. This completes the proof of the theorem.
\end{proof}

\section{Error Equations}\label{Section:error-equation}
The goal of this section is to derive some error equations for the primal-dual weak Galerkin method (\ref{32})-(\ref{2}). The error equations shall play a critical role in the forthcoming convergence analysis.

Let $u$ and $(u_h;\lambda_h) \in M_h\times W_h^0$ be the exact solution of the elliptic interface problem \eqref{model}-\eqref{model2} and its numerical solution arising from the PDWG scheme (\ref{32})-(\ref{2}). Note that the exact Lagrangian multiplier $\lambda=0$ is trivial. Denote the error functions by
\begin{align}\label{error}
e_h&= {\cal Q}^{k-1}_hu-u_h,\ \ e_u={\cal Q}^{k-1}_hu-u,
\\
\epsilon_h&=  Q_h\lambda-\lambda_h=-\lambda_h.\label{error-2}
\end{align}

\begin{lemma}\label{errorequa}
Let $u$ and $(u_h;\lambda_h) \in M_h\times W_h^0$ be the exact solution of elliptic interface problem \eqref{model}-\eqref{model2} and its numerical solution arising from the PDWG scheme (\ref{32})-(\ref{2}). The error functions $e_h$ and $\epsilon_h$ defined in (\ref{error})-(\ref{error-2}) satisfy the following equations:
\begin{eqnarray}\label{sehv}
s(\epsilon_h, \sigma)+b(e_h, \sigma)&=& \ell_u(\sigma), \qquad \forall \sigma\in W_{h}^0,\\
b(w, \epsilon_h)&=&0,\qquad\qquad \forall w\in M_h. \label{sehv2}
\end{eqnarray}
\end{lemma}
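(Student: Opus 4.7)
The proof is essentially an algebraic substitution argument that exploits the affine structure of the PDWG scheme. I begin by identifying what $\ell_u(\sigma)$ must be for the statement to hold, and then verify that the claimed identities follow directly from the scheme.

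\textbf{Step 1: Equation \eqref{sehv2}.} This is immediate. Since $\epsilon_h=-\lambda_h$ by definition, bilinearity of $b(\cdot,\cdot)$ gives $b(w,\epsilon_h)=-b(w,\lambda_h)$, and this vanishes for every $w\in M_h$ by the second equation \eqref{2} of the PDWG scheme.

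\textbf{Step 2: Equation \eqref{sehv}.} Substituting $e_h={\cal Q}^{k-1}_h u-u_h$ and $\epsilon_h=-\lambda_h$ into the left-hand side and using bilinearity,
\begin{equation*}
s(\epsilon_h,\sigma)+b(e_h,\sigma)=-\bigl(s(\lambda_h,\sigma)+b(u_h,\sigma)\bigr)+b({\cal Q}^{k-1}_h u,\sigma).
\end{equation*}
Invoking \eqref{32} of the PDWG scheme, the parenthesized quantity equals $\phi_h(\sigma)$, so the error identity holds with
\begin{equation*}
\ell_u(\sigma):=b({\cal Q}^{k-1}_h u,\sigma)-\phi_h(\sigma),\qquad \sigma\in W_h^0.
\end{equation*}
This is a legitimate linear functional on $W_h^0$, so the lemma is established once the identifications are made.

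\textbf{Step 3: Towards a usable form of $\ell_u$.} Although the statement of the lemma does not require an explicit expression, it is natural (and needed for the later error analysis) to unpack $\ell_u(\sigma)$. The plan is to expand $b({\cal Q}^{k-1}_h u,\sigma)$ elementwise by applying the discrete weak operator definitions \eqref{Loperator1-1}--\eqref{disgradient} with the polynomial test function ${\cal Q}^{k-1}_h u\in P_{k-1}(T)$, to substitute the PDE identity $-\nabla\cdot(a\nabla u)+\nabla\cdot(\bb u)+cu=f$ on each subdomain piece, and to compare with the four contributions of $\phi_h(\sigma)$. The projection property $(u-{\cal Q}^{k-1}_h u,{\cal L}_w\sigma)_T=0=(u-{\cal Q}^{k-1}_h u,\bb\cdot\nabla_w\sigma)_T$ will be used to rewrite the volumetric terms cleanly, and the two-valued constraint $\sigma_n^L+\sigma_n^R=0$ on interior faces will collapse the face integrals to interior-face/interface/boundary contributions that pair with $g_i$, $\phi_m$, $\psi_m$ in $\phi_h(\sigma)$.

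\textbf{Main obstacle.} The substitution yielding (\ref{sehv})--(\ref{sehv2}) itself is routine; the delicate part is the bookkeeping in Step 3. One must track boundary integrals across interior faces (where $\sigma_n$ is two-valued), across interface faces $\Gamma_m$ (where the jump conditions \eqref{bdg1}--\eqref{model2} must be invoked correctly with the chosen orientation $\bn_m$), and across $\partial\Omega$ (where $\sigma_b=0$ but $\sigma_n$ is free), so that the terms from the weak PDE identity exactly absorb $\phi_h(\sigma)$ and leave behind only consistency-error contributions suitable for the error estimates in Section~\ref{Section:error-estimates}.
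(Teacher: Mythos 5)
Your Steps 1 and 2 are correct and coincide with the paper's handling of \eqref{sehv2} and with its final subtraction of \eqref{32} from the computed identity. However, there is a genuine gap: the lemma is not the assertion that \emph{some} linear functional $\ell_u$ makes \eqref{sehv} hold --- that statement is vacuous, since the residual $b({\cal Q}^{k-1}_hu,\sigma)-\phi_h(\sigma)$ can always be given a name. The functional $\ell_u$ is the specific one in \eqref{lu},
\[
\ell_u(\sigma)=\sum_{T\in {\cal T}_h} \langle  e_u, a\nabla \sigma_0 \cdot \bn - \sigma_n \rangle_{\partial T}+ (\sigma_0, ce_u)_T
 +\langle  \sigma_b-\sigma_0, (a \nabla e_u -\bb e_u)\cdot \bn \rangle_{\partial T},
\]
and the entire mathematical content of the lemma is the identity $b({\cal Q}^{k-1}_hu,\sigma)-\phi_h(\sigma)=\ell_u(\sigma)$ with this explicit right-hand side: the error estimates of Section~\ref{Section:error-estimates} depend on $\ell_u$ being expressed solely through the projection error $e_u={\cal Q}^{k-1}_hu-u$ paired against traces of $\sigma$, so that the trace inequality and Lemma~\ref{Lemma5.2} apply. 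You correctly flag this computation as the delicate part, but your Step 3 only announces a plan and never carries it out; as written, the proof establishes a tautology and defers the actual claim.

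To close the gap you must execute the paper's computation (or an equivalent one): expand $b({\cal Q}^{k-1}_hu,\sigma)$ elementwise using \eqref{Loperator1-1} with $w={\cal Q}^{k-1}_hu$ and \eqref{disgradient} with $\bpsi=\bb\,{\cal Q}^{k-1}_hu$; rewrite the volume term $I=\sum_{T}(\sigma_0,{\cal L}({\cal Q}^{k-1}_hu)+\nabla\cdot(\bb{\cal Q}^{k-1}_hu)+c{\cal Q}^{k-1}_hu)_T$ by replacing ${\cal Q}^{k-1}_hu$ with $e_u$ up to the source term $(f,\sigma_0)$ via the PDE on each $\Omega_i$; integrate by parts twice and invoke $(e_u,{\cal L}\sigma_0)_T=0$ and $(\bb e_u,\nabla\sigma_0)_T=0$ (orthogonality of ${\cal Q}^{k-1}_h$ against $P_{k-1}(T)$, using that $a$ and $\bb$ are piecewise constant --- note the $(\sigma_0,ce_u)_T$ term does \emph{not} vanish since $\sigma_0\in P_k(T)$); and finally use the single-valuedness of $\sigma_b$, the constraint $\sigma_n^L+\sigma_n^R=0$ on interior faces, the condition $\sigma_b|_{\partial\Omega}=0$, and the data relations \eqref{bdg}--\eqref{model2} to cancel exactly the $g_i$, $\phi_m$, and $\psi_m$ contributions of $\phi_h(\sigma)$. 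Only after this bookkeeping does the residual collapse to \eqref{lu}, which is what the lemma asserts.
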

Here
 \begin{equation}\label{lu}
\qquad \ell_u(\sigma)=\sum_{T\in {\cal T}_h} \langle  e_u, a\nabla \sigma_0 \cdot \bn - \sigma_n \rangle_{\partial T}+ (\sigma_0, ce_u)_T
 +\langle  \sigma_b-\sigma_0, (a \nabla e_u -\bb e_u)\cdot \bn \rangle_{\partial T}.
\end{equation}

\begin{proof}
Recalling the definition of $b(\cdot,\cdot)$ in \eqref{EQ:local-bterm} and choosing $w={\cal Q}^{k-1}_hu$ in (\ref{Loperator1-1}) and $ \boldsymbol{\psi} =\bb {\cal Q}^{k-1}_hu$ in \eqref{disgradient}, we get
\begin{eqnarray*}
b({\cal Q}^{k-1}_hu, \sigma)
&=&\sum_{T\in {\cal T}_h}({\cal Q}^{k-1}_hu, {\cal L}_w\sigma-\bb \cdot \nabla_w \sigma+c\sigma_0)_T\\
&=&\sum_{T\in {\cal T}_h}(\sigma_0, {\cal L}({\cal Q}^{k-1}_hu))_T+ \langle \sigma_b, a \nabla{\cal Q}^{k-1}_hu \cdot \bn \rangle_{\partial T}-\langle \sigma_n, {\cal Q}^{k-1}_hu\rangle_{\partial T} \\
&+&(\sigma_0, \nabla \cdot(\bb {\cal Q}^{k-1}_hu))_T-\langle \sigma_b, \bb {\cal Q}^{k-1}_hu \cdot \bn\rangle_{\partial T}+({\cal Q}^{k-1}_hu, c\sigma_0)_T.
\end{eqnarray*}
Observe that the exact Lagrangian multiplier $\lambda=0$ is trivial, so that $s(Q_h\lambda, \sigma)=0$ for all $\sigma\in W_h^0$. It follows from \eqref{bdg1} -\eqref{model2} and  $\sigma_b|_{\partial\Omega}=0$ that
\begin{equation}\label{sterm}
\begin{split}
&s(Q_h\lambda, \sigma)+b({\cal Q}^{k-1}_hu, \sigma)\\
=&\sum_{T\in {\cal T}_h}(\sigma_0, {\cal L}({\cal Q}^{k-1}_hu)+\nabla \cdot (\bb {\cal Q}^{k-1}_h u)+c{\cal Q}^{k-1}_h u)_T\\
&+\sum_{T\in {\cal T}_h}( \langle  \sigma_b, (a \nabla({\cal Q}^{k-1}_hu-u) -\bb ({\cal Q}^{k-1}_hu-u))\cdot \bn \rangle_{\partial T}-\langle \sigma_n, {\cal Q}^{k-1}_hu-u \rangle_{\partial T} )\\
&-\sum_{i=1}^N \langle g_i, \sigma_n \rangle_{\partial \Omega_i \cap \partial \Omega}-\sum_{m=1}^M \langle \phi_m, \sigma_n  \rangle_{\Gamma_m}+\sum_{m=1}^M\langle \psi_m, \sigma_b  \rangle_{\Gamma_m}.
\end{split}
\end{equation}
Denote
\[
   I=\sum_{T\in {\cal T}_h}(\sigma_0, {\cal L}({\cal Q}^{k-1}_hu)+\nabla \cdot (\bb {\cal Q}^{k-1}_h u)+c{\cal Q}^{k-1}_h u)_T.
\]
By the usual integration by parts, and (\ref{model})-\eqref{model2}, we have
\begin{eqnarray*}
I&=& \sum_{T\in {\cal T}_h}(\sigma_0, {\cal L}e_u+\nabla \cdot (\bb e_u)+c e_u)_T+(f, \sigma_0)\\
&=& \sum_{T\in {\cal T}_h}\left((a\nabla e_u- \bb e_u, \nabla \sigma_0)_T-\langle (a\nabla e_u -\bb e_u)\cdot \bn, \sigma_0\rangle_{\partial T} \right)+ (\sigma_0, c e_u+f)\\
&=& \sum_{T\in {\cal T}_h}(e_u, {\cal L} \sigma_0)_T+\langle  e_u, a\nabla \sigma_0 \cdot \bn \rangle_{\partial T}- (\bb e_u, \nabla \sigma_0)_T-\langle (a\nabla e_u -\bb e_u)\cdot \bn, \sigma_0\rangle_{\partial T}\\
&&\ + (\sigma_0, c e_u+f)\\
&=& \sum_{T\in {\cal T}_h}\langle  e_u, a\nabla \sigma_0 \cdot \bn \rangle_{\partial T}-\langle (a\nabla e_u -\bb e_u)\cdot \bn, \sigma_0\rangle_{\partial T}+ (\sigma_0, c e_u+f),
\end{eqnarray*}
where we have used $(e_u, \nabla \cdot(a\nabla \sigma_0))_T=0$ and $(\nabla \sigma_0, \bb e_u)_T=0$ due to the orthogonality property of the $L^2$ projection operator ${\cal Q}^{k-1}_h$.

Substituting the above equation of $I$ into (\ref{sterm}) yields
\begin{equation}\label{errterm1}
\begin{split}
& s(Q_h\lambda, \sigma)+b({\cal Q}^{k-1}_hu, \sigma)\\
= & \sum_{T\in {\cal T}_h} \langle  e_u, a\nabla \sigma_0 \cdot \bn - \sigma_n \rangle_{\partial T}+ \langle  \sigma_b-\sigma_0, (a \nabla e_u -\bb e_u)\cdot \bn \rangle_{\partial T}+ (\sigma_0,ce_u+f)_T\\
&  -\sum_{i=1}^N \langle g_i, \sigma_n \rangle_{\partial \Omega_i \cap \partial \Omega}-\sum_{m=1}^M \langle \phi_m, \sigma_n  \rangle_{\Gamma_m}+\sum_{m=1}^M\langle \psi_m, \sigma_b  \rangle_{\Gamma_m}.
\end{split}
\end{equation}
The difference of (\ref{errterm1}) and (\ref{32}) gives (\ref{sehv}).

From $\lambda=0$, the equation (\ref{2}) gives rise to
\begin{equation*}
\begin{split}
b(w, \epsilon_h)=b(w, Q_h \lambda-\lambda_h)=b(w, -\lambda_h) =0,\qquad \forall w\in M_h,
\end{split}
\end{equation*}
which completes the derivation of (\ref{sehv2}). The proof is thus completed.
\end{proof}

The equations (\ref{sehv})-(\ref{sehv2}) are called {\em error
equations} for the primal-dual WG finite element scheme
(\ref{32})-(\ref{2}).

\begin{remark}
For $C^0$-WG elements (i.e.,  $\sigma_0=\sigma_b$ on the boundary of each element), the second term in (\ref{lu}) vanishes so that $l_u(\sigma)$ can be simplified as
 \begin{equation}\label{lu2}
\qquad \ell_u(\sigma)=\sum_{T\in {\cal T}_h} \langle  e_u, a\nabla \sigma_0 \cdot \bn - \sigma_n \rangle_{\partial T}+ (\sigma_0, ce_u)_T,
\end{equation}
which involves no derivative for the exact solution $u$. The expression \eqref{lu2} permits an error estimate under ultra-low regularity assumptions for the solution of the interface problem \eqref{model}-\eqref{model2}.
\end{remark}

\section{Error Estimates}\label{Section:error-estimates}
As $\T_h$ is a shape-regular finite element partition of
the domain $\Omega$, for any $T\in\T_h$ and $\varphi\in H^{\gamma}(T)\ (\frac{1}{2}< \gamma \leq 1)$, the following trace inequality holds true \cite{wy3655}:
\begin{equation}\label{trace-inequality}
\|\varphi\|_{\pT}^2 \leq C
(h_T^{-1}\|\varphi\|_{T}^2+h_T^{2 \gamma-1} \| \varphi\|_{\gamma, T}^2).
\end{equation}

\begin{lemma}\label{Lemma5.2}\cite{wy3655}  Let ${\cal T}_h$ be a
finite element partition of $\Omega$ satisfying the shape regularity
assumptions as specified in \cite{wy3655}.
The following estimates hold true
\begin{eqnarray}\label{3.2}
\sum_{T\in {\cal T}_h}h_T^{2l}\|u-{\cal Q}^{k-1}_hu\|^2_{l,T} &\leq &
Ch^{2m}\|u\|_{m}^2,\\
\sum_{T\in {\cal T}_h} h_T^{2l}\|u- Q_0 u\|^2_{l,T} &\leq& Ch^{2(m+1)}\|u\|_{m+1}^2, \label{3.1}
\end{eqnarray}
where $0\leq l \leq 2$ and $0\leq m\leq k$.
\end{lemma}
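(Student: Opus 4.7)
The plan is to reduce both inequalities to purely local approximation estimates on each element $T \in \mathcal{T}_h$, then sum with the weights $h_T^{2l}$. Specifically, it suffices to establish the local bounds
\begin{equation*}
\|u - {\cal Q}_h^{k-1} u\|_{l,T} \leq C\, h_T^{m-l}\,|u|_{m,T}, \qquad 0\le l\le m\le k,
\end{equation*}
and
\begin{equation*}
\|u - Q_0 u\|_{l,T} \leq C\, h_T^{m+1-l}\,|u|_{m+1,T}, \qquad 0\le l\le m+1,\ 0\le m\le k,
\end{equation*}
because once these are in hand, multiplying by $h_T^{2l}$ and summing over $T$ gives exactly the two claimed global estimates (using $|u|_{m,T}^2\le \|u\|_{m,T}^2$ and the additivity of Sobolev norms over the partition).

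For the first local estimate I would appeal to a Bramble--Hilbert / Dupont--Scott averaged Taylor polynomial argument on $T$. Since $\mathcal{T}_h$ consists of shape-regular polygonal or polyhedral elements in the sense of \cite{wy3655}, each $T$ is star-shaped with respect to a ball of radius proportional to $h_T$. One constructs an averaged Taylor polynomial $P_T u \in P_{m-1}(T)\subset P_{k-1}(T)$ (using $m\le k$) satisfying $\|u - P_T u\|_{l,T} \le C h_T^{m-l}|u|_{m,T}$. The $L^2$ optimality of the projection ${\cal Q}_h^{k-1}$ yields the desired bound at $l=0$, and the estimates at $l=1,2$ follow by writing
\begin{equation*}
u - {\cal Q}_h^{k-1} u = (u - P_T u) - ({\cal Q}_h^{k-1} u - P_T u),
\end{equation*}
and applying a shape-regular inverse inequality to the polynomial ${\cal Q}_h^{k-1} u - P_T u \in P_{k-1}(T)$ together with the $L^2$ estimate already obtained. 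The analogous construction for $Q_0$, whose range is $P_k(T)$, yields the second local estimate since now one can use a Taylor polynomial of one higher degree (up to $P_m(T) \subset P_k(T)$ whenever $m\le k$).

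Summing, for the ${\cal Q}_h^{k-1}$ case,
\begin{equation*}
\sum_{T\in \mathcal{T}_h} h_T^{2l}\|u - {\cal Q}_h^{k-1} u\|_{l,T}^2
\le C \sum_{T\in \mathcal{T}_h} h_T^{2m}\,|u|_{m,T}^2
\le C h^{2m} \sum_{T\in \mathcal{T}_h} |u|_{m,T}^2
\le C h^{2m} \|u\|_m^2,
\end{equation*}
and entirely analogously with $m$ replaced by $m+1$ for $Q_0$, which yields the second inequality. The main obstacle is the local polynomial approximation estimate on a general shape-regular polygonal or polyhedral element, which is not affine-equivalent to a reference element; however, the Dupont--Scott machinery (or the equivalent result cited in \cite{wy3655}) is tailored precisely to this setting, so the heavy lifting can be deferred to that reference rather than rederived from scratch.
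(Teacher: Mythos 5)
The paper does not actually prove this lemma---it is quoted directly from \cite{wy3655}---and your Bramble--Hilbert/averaged-Taylor-polynomial argument on star-shaped, shape-regular polytopal elements, combined with an inverse inequality to pass from $l=0$ to $l=1,2$, is precisely the standard proof given in that reference, so your approach matches. The only caveat is that your local estimates require $l\le m$ (respectively $l\le m+1$), whereas the lemma's stated ranges $0\le l\le 2$ and $0\le m\le k$ are formally independent; this is harmless, since for $l>m$ the left-hand norms need not even be finite for $u\in H^m$ and every application of the lemma in the paper respects your constraints.
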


The main convergence result can be stated as follows.
\begin{theorem} \label{theoestimate}
Assume $k\geq 1$. Let $u$ and $(u_h;\lambda_h) \in M_h\times W_h^0$ be the exact solution of the elliptic interface problem \eqref{model}-\eqref{model2} and its numerical solution arising from the PDWG scheme (\ref{32})-(\ref{2}). Assume that $u$ is sufficiently regular such that $u\in \prod_{i=1}^N  H^{k}(\Omega_i)\cap H^{1+\gamma}(\Omega_i)$. The following error estimate holds true:
 \begin{equation}\label{erres}
\3bar \epsilon_h \3bar_w+h^{1-\gamma}\|e_h\|_{1-\gamma}\leq Ch^{k}\left((1+\tau^{-1})\sum_{i=1}^N \|u\|^2_{k, \Omega_i}+ \delta _{k, 1}h^{2\gamma}\|u\|^2_{1+\gamma, \Omega_i}\right)^{\frac{1}{2}},\end{equation}
where $\frac{1}{2}< \gamma \leq 1$ and $\delta_{i, j}$ is the Kronecker delta with value 1 when $i=j$ and 0 otherwise. As a result, one has the following optimal order error estimate in the $H^{1-\gamma}$-norm for $u_h$
 \begin{equation}\label{erres:L2}
 \|u - u_h\|_{1-\gamma} \leq Ch^{k+\gamma-1}\left((1+\tau^{-1})\sum_{i=1}^N \|u\|^2_{k, \Omega_i}+ \delta _{k, 1}h^{2\gamma}\|u\|^2_{1+\gamma, \Omega_i}\right)^{\frac{1}{2}}.
 \end{equation}
\end{theorem}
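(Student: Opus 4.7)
The plan is to exploit the error equations of Lemma \ref{errorequa}. Taking $\sigma = \epsilon_h$ in \eqref{sehv} and $w = e_h$ in \eqref{sehv2}, the second identity gives $b(e_h, \epsilon_h) = 0$, so the cross term drops out and I obtain
$$\3bar \epsilon_h \3bar_w^2 = s(\epsilon_h, \epsilon_h) = \ell_u(\epsilon_h).$$
The bulk of \eqref{erres} then reduces to proving a continuity bound
$$|\ell_u(\sigma)| \le C\,\mathcal E(u)\, \3bar \sigma \3bar_w, \qquad \forall\, \sigma \in W_h^0,$$
where $\mathcal E(u)$ denotes the right-hand side of \eqref{erres}. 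Inserting $\sigma = \epsilon_h$ yields $\3bar \epsilon_h \3bar_w \le C\mathcal E(u)$. For the $H^{1-\gamma}$ norm of $e_h$, I combine the inf-sup condition of Lemma \ref{lem3-new} with the identity $b(e_h,\sigma) = \ell_u(\sigma) - s(\epsilon_h,\sigma)$ from \eqref{sehv}, giving
$$\beta h^{1-\gamma}\|e_h\|_{1-\gamma} \le \sup_{0 \ne \sigma \in W_h^0}\frac{|\ell_u(\sigma)| + |s(\epsilon_h,\sigma)|}{\3bar \sigma \3bar_w} \le C\mathcal E(u) + \3bar \epsilon_h \3bar_w \le C\mathcal E(u).$$

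For the continuity estimate of $\ell_u(\sigma)$ I treat the three terms in \eqref{lu} via weighted Cauchy-Schwarz so each $\sigma$-dependent factor matches a piece of $\3bar \sigma \3bar_w$. The term $\sum_T\langle e_u, a\nabla \sigma_0\cdot\bn - \sigma_n\rangle_{\partial T}$ is split with weights $h_T^{1/2}$ and $h_T^{-1/2}$: the second factor is exactly the $h_T^{-1/2}$-weighted $L^2(\partial T)$ seminorm of $a\nabla \sigma_0\cdot\bn - \sigma_n$, controlled by $\3bar \sigma \3bar_w$; the remaining factor $(\sum_T h_T\|e_u\|_{\partial T}^2)^{1/2}$ is bounded by the trace inequality \eqref{trace-inequality} and the projection estimate \eqref{3.2}. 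The term $(\sigma_0, c\,e_u)_T$ is handled via $\|\sigma_0\|_T \le \tau^{-1/2}\3bar \sigma \3bar_w$ encoded in the mass contribution $\tau(\sigma_0,\sigma_0)_T$ of $s_T$, which is the source of the $\tau^{-1}$ factor. The term $\sum_T \langle \sigma_b - \sigma_0, (a\nabla e_u - \bb e_u)\cdot\bn\rangle_{\partial T}$ is split with weights $h_T^{-3/2}$ and $h_T^{3/2}$, the first matching $\3bar \sigma \3bar_w$ and the second handled by trace plus Lemma \ref{Lemma5.2}.

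Estimate \eqref{erres:L2} then follows from the triangle inequality $\|u - u_h\|_{1-\gamma} \le \|e_u\|_{1-\gamma} + \|e_h\|_{1-\gamma}$. The piece $\|e_u\|_{1-\gamma}$ is the standard $L^2$-projection error of order $h^{k+\gamma-1}\|u\|_k$ obtained by interpolating between the $L^2$ and $H^1$ bounds of Lemma \ref{Lemma5.2}, while $\|e_h\|_{1-\gamma}$ is merely the already-proven $h^{1-\gamma}$-weighted bound in \eqref{erres} rescaled by $h^{\gamma-1}$.

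The main obstacle is the third term of $\ell_u(\sigma)$, which involves $\nabla e_u$ on element boundaries. For $k \ge 2$, standard approximation theory for the $L^2$-projection ${\cal Q}_h^{k-1}$ yields $\|\nabla e_u\|_T = O(h^{k-1})$ and $\|\nabla e_u\|_{\gamma,T} = O(h^{k-1-\gamma})$ on each subdomain piece, which combine with the $h_T^{3/2}$ weight through \eqref{trace-inequality} to deliver the clean $O(h^k)\sum_i \|u\|_{k,\Omega_i}$ rate. For $k = 1$, however, ${\cal Q}_h^0$ annihilates only constants, so $\nabla e_u = -\nabla u$ has no smallness; in this case $\|a\nabla u \cdot\bn\|_{\partial T}$ must be bounded directly by applying the trace inequality to $u \in H^{1+\gamma}(\Omega_i)$, producing exactly the correction term $\delta_{k,1}h^{2\gamma}\|u\|^2_{1+\gamma,\Omega_i}$ in \eqref{erres}.
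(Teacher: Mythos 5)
Your proposal is correct and follows essentially the same route as the paper: the identity $\3bar\epsilon_h\3bar_w^2=\ell_u(\epsilon_h)$ from the error equations, the weighted Cauchy--Schwarz splitting of the three terms of $\ell_u$ with exactly the weights $h_T^{\pm 3/2}$, $h_T^{\pm 1/2}$, $\tau^{\pm 1/2}$ matching the stabilizer, the trace inequality plus Lemma \ref{Lemma5.2}, the separate treatment of $k=1$ (where $\nabla e_u=-\nabla u$ forces the $\delta_{k,1}h^{2\gamma}\|u\|^2_{1+\gamma,\Omega_i}$ correction), and the \emph{inf-sup} condition for the $\|e_h\|_{1-\gamma}$ bound. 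No substantive difference from the paper's argument.
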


\begin{proof}  Note that $\epsilon_h=-\lambda_h\in W_h^0$.  Choosing  $\sigma = \epsilon_h$  and $w=e_h$ in (\ref{sehv}) and (\ref{sehv2}) gives rise to
\begin{equation}\label{EQ:April7:001}
s(\epsilon_h, \epsilon_h) = \ell_u(\epsilon_h).
\end{equation}

For $k \geq 2$, we use (\ref{EQ:April7:001}), the Cauchy-Schwarz inequality, the equation (\ref{lu}), the trace inequality (\ref{trace-inequality}), and the estimates (\ref{3.2}) with $m=k$  to obtain
\begin{eqnarray*}
&&\3bar \epsilon_h \3bar_w^2 =s(\epsilon_h, \epsilon_h) = |\ell_u(\epsilon_h)| \\
&\leq & \Big(\sum_{T\in {\cal
T}_h}h_T^{-3}\|\epsilon_0-\epsilon_b\|^2_{\partial
T}\Big)^{\frac{1}{2}} \Big(\sum_{T\in {\cal T}_h}h_T^{ 3}\| (a\nabla e_u-\bb e_u)\cdot
\bn\|^2_{\partial T}\Big)^{\frac{1}{2}}\\
 &\ \ + & \Big(\sum_{T\in {\cal T}_h} h_T^{-1}\| a\nabla \epsilon_0\cdot \bn-\epsilon_n\|^2_{\partial T}\Big)^{\frac{1}{2}}
\Big(\sum_{T\in {\cal T}_h} h_T \|e_u
\|^2_{\partial T}\Big)^{\frac{1}{2}}\\
&&\ \ +\Big(\sum_{T\in {\cal T}_h}\tau \|\epsilon_0\|^2_T \Big)^{\frac{1}{2}}
\Big(\sum_{T\in {\cal T}_h} \tau^{-1}\|ce_u\|_T^2 \Big)^{\frac{1}{2}}\\
&\leq& C\3bar \epsilon_h \3bar_w   \left(\sum_{T\in {\cal T}_h}h_T^{2}\| a\nabla e_u-\bb e_u\|^2_{T}
+ h_T^{2\gamma+2} \| a\nabla e_u-\bb e_u\|^2_{\gamma, T}\right. \\
&&\ \ \left. +\|e_u
\|^2_{T}+h_T^{2\gamma} \|e_u\|^{2}_{\gamma, T}+ \tau^{-1}\|ce_u\|_T^2\right)^{\frac{1}{2}}.
\end{eqnarray*}
 Therefore, for $k\ge 2$, we have
\[
  \3bar \epsilon_h \3bar_w^2 \leq C h^{k}\left((1+\tau^{-1})\sum_{i=1}^N\|u\|^2_{k, \Omega_i}\right)^{\frac{1}{2}} \3bar \epsilon_h \3bar_w.
\]
As to the case $k=1$, we have $\nabla  {\cal Q}^{k-1}_hu=0$ and thus
\begin{eqnarray*}
  \3bar \epsilon_h \3bar_w^2 &\leq & C\Big(\sum_{T\in {\cal T}_h}h_T^{2}\| a\nabla u-\bb e_u\|^2_{T}
+ h_T^{2\gamma+2} \| a\nabla u-\bb e_u\|^2_{\gamma, T}\\
&&\ +\|e_u
\|^2_{T}+h_T^{2\gamma} \|e_u\|^{2}_{\gamma, T}+ \tau^{-1}\|ce_u\|_T^2\Big)^{\frac{1}{2}},\\
  &\leq &  C h\left((1+\tau^{-1})\sum_{i=1}^N \|u\|^2_{1, \Omega_i}+ h^{2\gamma}\|u\|^2_{1+\gamma, \Omega_i}\right)^{\frac{1}{2}}.
\end{eqnarray*}
Consequently, there holds for all $k\geq 1$
\begin{equation}\label{EQ:April7:1-2-2}
\3bar \epsilon_h \3bar_w \leq C h^{k}\left((1+\tau^{-1})\sum_{i=1}^N \|u\|^2_{k, \Omega_i}+ \delta _{k, 1}h^{2\gamma}\|u\|^2_{1+\gamma, \Omega_i}\right)^{\frac{1}{2}}.
\end{equation}
As to the estimate for the error function $e_h$,
 we use the error equation (\ref{sehv}),  \eqref{EQ:April7:1-2-2}, the Cauchy-Schwarz inequality, and the triangle inequality to obtain
\begin{equation*}\label{EQ:April7:006}
\begin{split}
|b(e_h, \sigma)| &=|\ell_u(\sigma) - s(\epsilon_h, \sigma)| \leq |\ell_u(\sigma)| + \3bar \epsilon_h\3bar_w  \3bar \sigma\3bar_w\\
& \leq  Ch^{k}\left((1+\tau^{-1})\sum_{i=1}^N \|u\|^2_{k, \Omega_i}+ \delta _{k, 1}h^{2\gamma}\|u\|^2_{1+\gamma, \Omega_i}\right)^{\frac{1}{2}}\3bar \sigma \3bar_w,
\end{split}
\end{equation*}
which, combined with the \emph{inf-sup} condition (\ref{EQ:inf-sup-condition-02}), yields the following error estimate
\begin{equation}\label{EQ:April7:008}
\beta h^{1-\gamma} \|e_h\|_{1-\gamma} \leq C h^{k}\left((1+\tau^{-1})\sum_{i=1}^N \|u\|^2_{k, \Omega_i}+ \delta _{k, 1}h^{2\gamma}\|u\|^2_{1+\gamma, \Omega_i}\right)^{\frac{1}{2}}.
\end{equation}
Then the desired error estimate (\ref{erres}) follows from (\ref{EQ:April7:1-2-2}) and
(\ref{EQ:April7:008}). Finally, the estimate \eqref{erres:L2} is a direct result of \eqref{erres} and the triangle inequality. This completes the proof of the theorem.
\end{proof}

\begin{corollary} Assume $k\geq 1$.  Let $u$ and $(u_h;\lambda_h) \in M_h\times W_h^0$ be the exact solution of the elliptic interface problem \eqref{model}-\eqref{model2} and its numerical solution arising from the PDWG scheme (\ref{32})-(\ref{2}). Assume that the exact solution $u$ has the ``low" regularity of $u\in \prod_{i=1}^N  H^{\delta}(\Omega_i)$ for some $\frac{1}{2}< \delta \leq k$. Then, for $C^0$-WG elements, the following error estimate holds true:
 \begin{equation}\label{erres1-2}
\3bar \epsilon_h \3bar_w+h^{1-\gamma}\|e_h\|_{1-\gamma}\leq Ch^{\delta} (1+\tau^{-1})^{\frac{1}{2}}
\left(\sum_{i=1}^N \|u\|^2_{\delta, \Omega_i}\right)^{\frac{1}{2}},
\end{equation}
where $\frac{1}{2}< \gamma \leq 1$ is related to the regularity estimate \eqref{regu2}.
Consequently, one has the following error estimate
 \begin{equation}\label{erres:L2-2}
 \|u - u_h\|_{1-\gamma} \leq Ch^{\delta+\gamma-1} (1+\tau^{-1}) ^{\frac{1}{2}}\left(\sum_{i=1}^N \|u\|^2_{\delta, \Omega_i}\right)^{\frac{1}{2}}.
 \end{equation}
\end{corollary}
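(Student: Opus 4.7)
The plan is to mirror the argument of Theorem \ref{theoestimate}, but exploit the simplified expression (\ref{lu2}) for $\ell_u(\sigma)$ which is available in the $C^0$-WG setting. The crucial point is that in (\ref{lu2}) the term $\langle \sigma_b - \sigma_0, (a\nabla e_u - \bb e_u)\cdot \bn\rangle_{\partial T}$ has dropped out because $\sigma_0 = \sigma_b$ on $\partial T$. What remains contains only $e_u = {\cal Q}_h^{k-1}u - u$ itself (no derivatives of $u$), so the whole estimate can be carried out at the $H^\delta$-regularity level with $\delta > 1/2$.

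First, I would set $\sigma = \epsilon_h$ in the error equation (\ref{sehv}) and use (\ref{sehv2}) with $w = e_h$ to obtain the identity $s(\epsilon_h, \epsilon_h) = \ell_u(\epsilon_h)$, now with $\ell_u$ given by (\ref{lu2}). The boundary term is split by Cauchy--Schwarz as
\begin{equation*}
\Bigl|\sum_{T}\langle e_u, a\nabla\epsilon_0\cdot\bn - \epsilon_n\rangle_{\partial T}\Bigr|
\le \Bigl(\sum_T h_T^{-1}\|a\nabla\epsilon_0\cdot\bn - \epsilon_n\|^2_{\partial T}\Bigr)^{1/2}\Bigl(\sum_T h_T\|e_u\|^2_{\partial T}\Bigr)^{1/2},
\end{equation*}
where the first factor is bounded by $\3bar\epsilon_h\3bar_w$. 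For the second factor I would apply the trace inequality (\ref{trace-inequality}) with exponent $\delta$ (valid since $\delta > 1/2$) to get $h_T\|e_u\|^2_{\partial T} \le C(\|e_u\|^2_T + h_T^{2\delta}\|e_u\|^2_{\delta,T})$, and then use the $L^2$-approximation estimate (\ref{3.2}) (extended to fractional exponent $m = \delta$ via standard interpolation/Bramble--Hilbert on the reference element together with $H^\delta$-stability of ${\cal Q}_h^{k-1}$) to conclude that this term is bounded by $Ch^{2\delta}\sum_i \|u\|^2_{\delta,\Omega_i}$.

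The volume term $\sum_T(\epsilon_0, ce_u)_T$ is handled analogously: by Cauchy--Schwarz with the $\tau$-weighting it is majorized by $\3bar\epsilon_h\3bar_w \cdot \tau^{-1/2}(\sum_T\|ce_u\|^2_T)^{1/2}$, and (\ref{3.2}) with $m=\delta$ gives $\sum_T\|ce_u\|^2_T \le Ch^{2\delta}\sum_i\|u\|^2_{\delta,\Omega_i}$. Collecting these estimates and dividing by $\3bar\epsilon_h\3bar_w$ yields the bound
\begin{equation*}
\3bar \epsilon_h \3bar_w \le Ch^{\delta}(1+\tau^{-1})^{1/2}\Bigl(\sum_{i=1}^N \|u\|^2_{\delta,\Omega_i}\Bigr)^{1/2}.
\end{equation*}

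To recover the $h^{1-\gamma}\|e_h\|_{1-\gamma}$ part of (\ref{erres1-2}), I would plug $\sigma \in W_h^0$ into the error equation (\ref{sehv}), use the bound just derived on $\3bar\epsilon_h\3bar_w$ together with the same $\ell_u$-estimate (now against a generic $\sigma$), and then invoke the inf-sup condition of Lemma \ref{lem3-new} on $b(e_h,\cdot)$. The estimate (\ref{erres:L2-2}) then follows from (\ref{erres1-2}) by the triangle inequality $\|u-u_h\|_{1-\gamma} \le \|e_u\|_{1-\gamma} + \|e_h\|_{1-\gamma}$, noting that $\|e_u\|_{1-\gamma}\le Ch^{\delta-(1-\gamma)}\sum_i\|u\|_{\delta,\Omega_i}$ (since $1-\gamma < 1/2 < \delta$) absorbs into the target rate $h^{\delta+\gamma-1}$.

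The main obstacle I anticipate is a clean justification of the fractional approximation estimate $\sum_T h_T^{2\delta}\|u - {\cal Q}_h^{k-1}u\|^2_{\delta,T}\lesssim h^{2\delta}\sum_i\|u\|^2_{\delta,\Omega_i}$ for non-integer $\delta$ on shape-regular polygonal meshes; this requires invoking $H^\delta$-stability of the elementwise $L^2$-projection together with Sobolev interpolation, but is standard and should be quotable. Everything else mirrors the proof of Theorem \ref{theoestimate} with the regularity exponent $k$ replaced by $\delta$, and with the simplified $\ell_u$ that the $C^0$-conformity grants.
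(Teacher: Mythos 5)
Your proposal is correct and follows essentially the same route as the paper's own proof: testing the error equations with $\sigma=\epsilon_h$, $w=e_h$, bounding the simplified $\ell_u$ of (\ref{lu2}) via Cauchy--Schwarz, the trace inequality, and (\ref{3.2}) with $m=\delta$, then recovering $\|e_h\|_{1-\gamma}$ through the inf-sup condition and concluding by the triangle inequality. The only cosmetic difference is that you apply the trace inequality with exponent $\delta$ while the paper writes it with $\gamma$; both lead to the same $h^\delta$ bound, and your explicit remark on justifying the fractional-order approximation estimate is a point the paper leaves implicit.
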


\begin{proof} It is easy to see $\epsilon_h\in W_h^0$. By letting $\sigma = \epsilon_h$  and $w=e_h$ in (\ref{sehv}) and (\ref{sehv2}) we arrive at
\begin{equation}\label{EQ:April7:001-1}
s(\epsilon_h, \epsilon_h) = \ell_u(\epsilon_h).
\end{equation}
Now, we use (\ref{EQ:April7:001-1}), the Cauchy-Schwarz inequality, the equation (\ref{lu2}), the trace inequality (\ref{trace-inequality}), and the estimates (\ref{3.2}) with $m=\delta$  to obtain
\begin{eqnarray*}
&&\3bar \epsilon_h \3bar_w^2 =s(\epsilon_h, \epsilon_h) = |\ell_u(\epsilon_h)| \\
&\leq & \Big(\sum_{T\in {\cal T}_h} h_T^{-1}\| a\nabla \epsilon_0\cdot \bn-\epsilon_n\|^2_{\partial T}\Big)^{\frac{1}{2}}
\Big(\sum_{T\in {\cal T}_h} h_T \|e_u
\|^2_{\partial T}\Big)^{\frac{1}{2}}\\
&&\ + \Big(\sum_{T\in {\cal T}_h}\tau \|\epsilon_0\|^2 \Big)^{\frac{1}{2}}
\Big(\sum_{T\in {\cal T}_h} \tau^{-1}\|ce_u\|_T^2 \Big)^{\frac{1}{2}}\\
&\leq& C\3bar \epsilon_h \3bar_w   \Big(\sum_{T\in {\cal T}_h} \|e_u
\|^2_{T}+h_T^{2\gamma} \|e_u\|^{2}_{\gamma, T}+ \tau^{-1}\|ce_u\|_T^2\Big)^{\frac{1}{2}}\\
&\leq&C \3bar \epsilon_h \3bar_w   h^{\delta} (1+\tau^{-1}) ^{\frac{1}{2}}
\left(\sum_{i=1}^N \|u\|^2_{\delta, \Omega_i}\right)^{\frac{1}{2}}.
\end{eqnarray*}
Hence,
\begin{equation}\label{EQ:April7:1-2-2-2}
\3bar \epsilon_h \3bar_w\leq Ch^{\delta} (1+\tau^{-1}) ^{\frac{1}{2}}\left(\sum_{i=1}^N \|u\|^2_{\delta, \Omega_i}\right)^{\frac{1}{2}}.
\end{equation}

As to the estimate for $e_h$, we use the error equation (\ref{sehv}), \eqref{EQ:April7:1-2-2-2}, the Cauchy-Schwarz inequality, and the triangle inequality to obtain
\begin{equation*}
\begin{split}
|b(e_h, \sigma)| &=|\ell_u(\sigma) - s(\epsilon_h, \sigma)| \leq |\ell_u(\sigma)| + \3bar \epsilon_h\3bar_w  \3bar \sigma\3bar_w\\
& \leq  Ch^{\delta} (1+\tau^{-1}) ^{\frac{1}{2}}\3bar \sigma \3bar_w \left(\sum_{i=1}^N \|u\|^2_{\delta, \Omega_i}\right)^{\frac{1}{2}},
\end{split}
\end{equation*}
which, combined with the \emph{inf-sup} condition (\ref{EQ:inf-sup-condition-02}), gives rise to the following error estimate
\begin{equation}\label{EQ:April7:008-2}
\beta h^{1-\gamma} \|e_h\|_{1-\gamma}   \leq  Ch^{\delta} (1+\tau^{-1}) ^{\frac{1}{2}}\left(\sum_{i=1}^N \|u\|^2_{\delta, \Omega_i}\right)^{\frac{1}{2}}.
\end{equation}
The desired error estimate (\ref{erres1-2}) then follows from (\ref{EQ:April7:1-2-2-2}) and
(\ref{EQ:April7:008-2}). Finally, \eqref{erres:L2-2} is a direct consequence of \eqref{erres1-2} and the triangle inequality. This completes the proof of the theorem.
\end{proof}

\section{Numerical Experiments}\label{Section:NE}
In this section, we will present some numerical results to verify the efficiency and accuracy of the proposed primal-dual weak Galerkin method \eqref{32}-\eqref{2} for solving the elliptic interface problem \eqref{model}-\eqref{model2}. In our experiments, we shall implement the algorithm with $k=1, 2$ in the finite element spaces $M_h$ and $W_h^0$. We shall compute various approximation errors for $u_h$ and $\lambda_h$, including the $L^2$ error $\|u-u_h\|_0$ and $\|\lambda_0\|_0$, the $H^1$ error $\|\lambda_0\|_1$ for $\lambda_h$, and the discrete error $\|\lambda_h\|_w$ as defined by \eqref{norm-new}. If not otherwise stated, the parameter $\tau$ in the PDWG numerical scheme will be $\tau=1$. The finite element partition $\T_h$ is obtained through a successive refinement of a coarse triangulation of the domain in aligning with the interface, by dividing each coarse element into four congruent sub-elements by connecting the mid-points of the three edges of the triangle. The right-hand side functions, the boundary and interface conditions are all derived from the exact solution.

{\it Example 1:} We consider the interface problem \eqref{model}-\eqref{model2} on the domain $\Omega=(0,1)^2$ with an interface given by $\Omega_1=[0.25,0.75]^2$ and
  $\Omega_2=\Omega\backslash\Omega_1$. The coefficients in the model equations are taken as
\[
     a_{1}=a_{2}=1, \ \ {\bf b}_1={\bf b}_2=(1,1),\ \ c_1=c_2=1.
\]
 The analytical solution to the elliptic equation is given as
 \[
   u=\left\{\begin{array}{ll}
  10-x^2-y^2 & \text{if} \ (x,y)\in\Omega_1,\\
  \sin({\pi x})\sin({\pi y}) & \text{if} \ (x,y)\in\Omega_2.
  \end{array}
  \right.
 \]
The initial mesh is shown in Figure \ref{fig:2} (left one). The mesh refinement of the previous level is done by connecting the mid-points of the edges. The mesh at the next level
is illustrated in Figure \ref{fig:2} (right one). The surface plot of the  PDWG solution $u_h$ on the finest mesh (i.e., after the fifth refinement of the initial mesh) is depicted in Figure \ref{fig:1}.

\begin{figure}[htb]
  \centering
  \includegraphics[width=.32\textwidth]{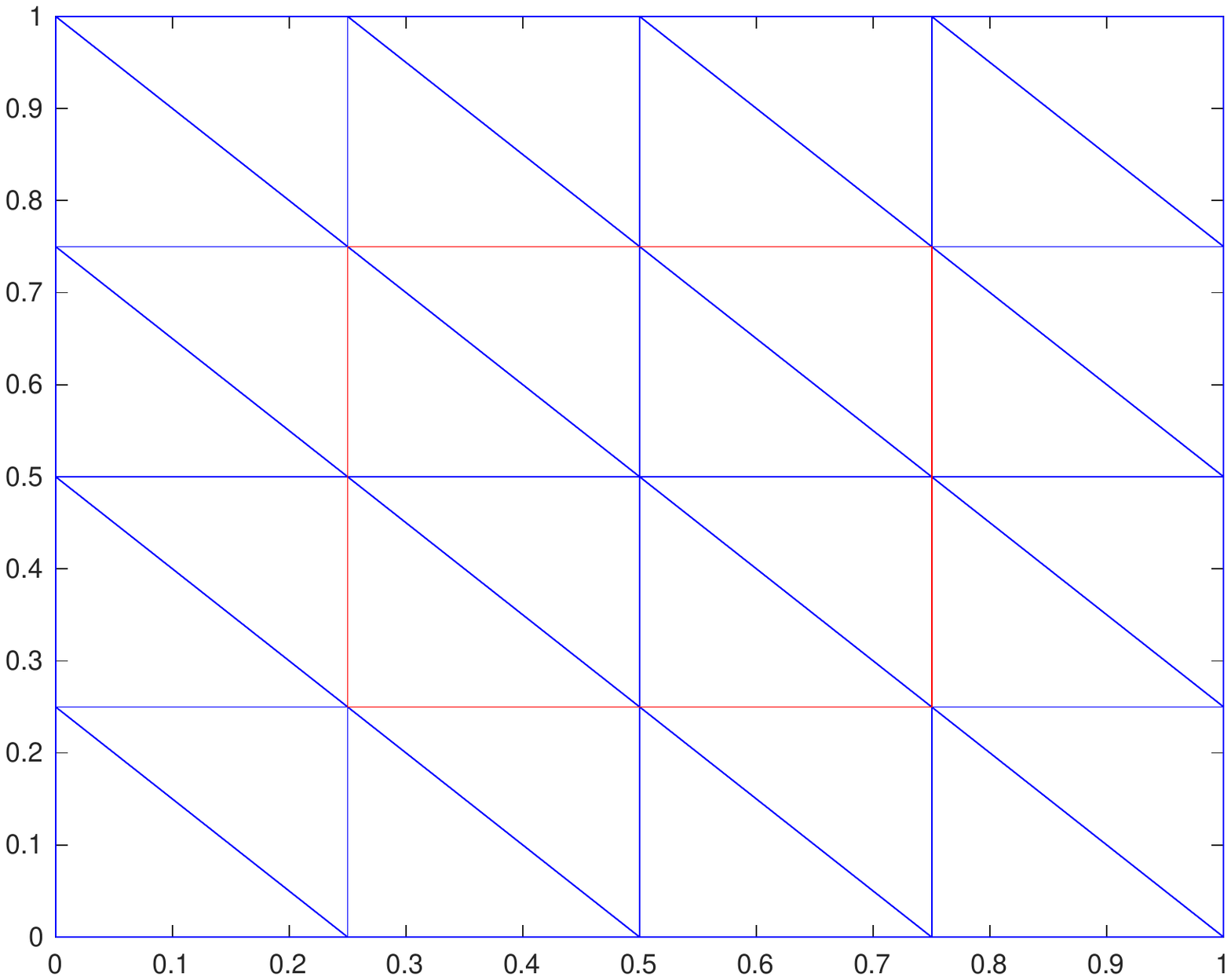}
  \includegraphics[width=.32\textwidth]{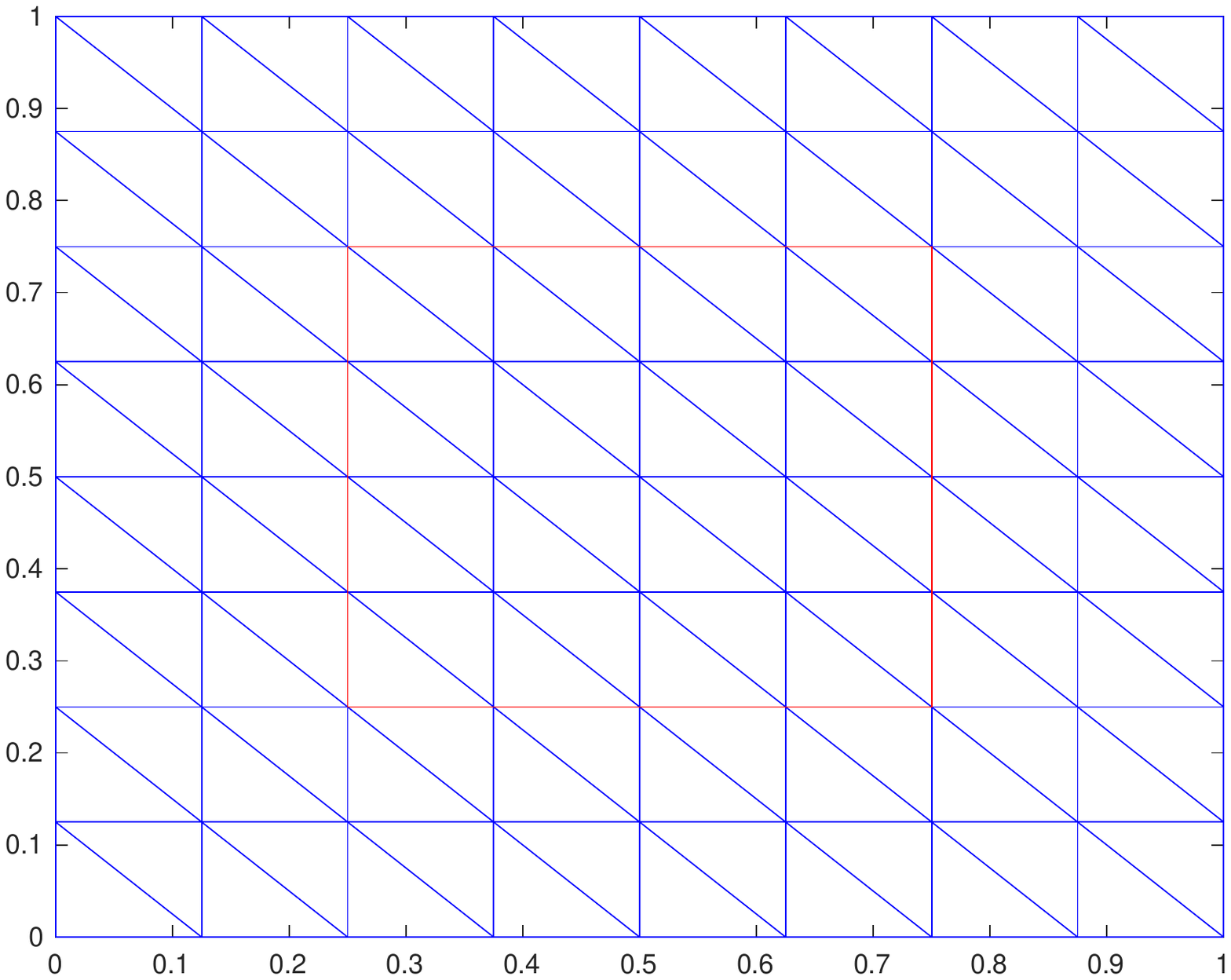}~
 \caption{The initial mesh (left) and the next level refinement (right).}
  \label{fig:2}
\end{figure}
\begin{figure}[htb]
  \centering
  \includegraphics[width=.6\textwidth]{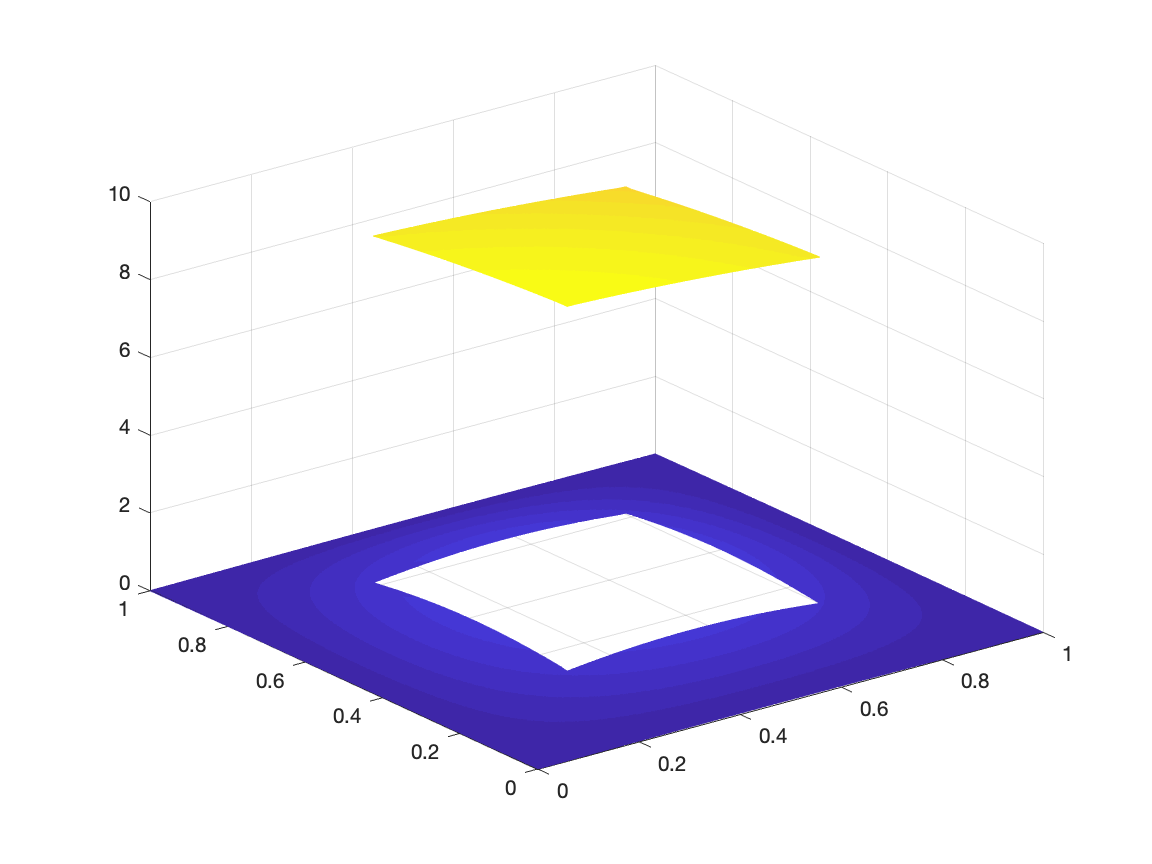}~
 \caption{Surface plot of the numerical solution $u_h$ calculated by the PDWG method of Example 1.}
  \label{fig:1}
\end{figure}

Table \ref{1} shows the numerical results and the rate of convergence
for $k=1,2$. We observe that, for both linear ($k=1$) and quadratic ($k=2$) PDWG methods, the
convergence rate for the errors $\|u-u_h\|_0$ and $\|\lambda_h\|_w$ is of $\mathcal{O}(h^{k})$,
which is consistent with the theoretical estimate \eqref{erres} in Theorem \ref{theoestimate}. As to the approximation error for $\lambda_0$, we observe a convergence rate of
$\mathcal{O}(h^{k+1})$ for $\|\lambda_0\|_1$ from this numerical experiment, which suggests a superconvergence for the dual variable $\lambda_0$ in the $H^1$-norm. We further observe a convergence of order $\mathcal{O}(h^{k+1})$ for $k=1$ and of $h^{k+2}$ for $k=2$ for $\lambda_0$
in the $L^2$ norm. Again, the $(k+2)$-th order of convergence for $\|\lambda_0\|_0$ indicates a pleasant superconvergence phenomenon of the PDWG method.

\begin{table}[htbp]\caption{Errors and  convergence rates of the linear and quadratic PDWG methods for Example 1.}\label{1}\centering
\begin{threeparttable}
        \begin{tabular}{c |c |c | c | c | c | c | c | c | c| }
        \hline
  & $h$ &   $\|\lambda_h\|_w$ & rate& $\|\lambda_0\|_1$  & rate & $\|\lambda_0\|_0$ & rate & $\|u-u_h\|_0$ & rate \\
       \hline \cline{2-10}
        &2.50e-1 &  1.15e-0 &   --   & 2.56e-1  &     -- &  1.21e-1 &  -- &  1.74e-1  &   --\\
        &1.25e-1 &  6.32e-1 &  0.87  & 9.52e-2  & 1.43 &  3.56e-2 &  1.77 &  7.78e-2  & 1.16\\
 $k=1$  &6.25e-2 &  3.34e-1 &  0.92  & 3.01e-2 &  1.66 &  9.73e-3 &  1.87 &  3.51e-2  & 1.15\\
        &3.13e-2 &  1.72e-1 &  0.96 &  8.49e-3  & 1.82 &  2.54e-3 &  1.94 &  1.66e-2  & 1.09\\
        &1.56e-2 &  8.73e-2 &  0.98 &  2.26e-3  & 1.92 &  6.47e-4 &  1.97  & 8.06e-3  & 1.04\\
  \hline
  & $h$ &   $\|\lambda_h\|_w$ & rate& $\|\lambda_0\|_1$  & rate & $\|\lambda_0\|_0$ & rate & $\|u-u_h\|_0$ & rate \\
       \hline \cline{2-10}
   &2.50e-1 &  2.00e-1 &     --      & 1.88e-2 &   --  & 5.71e-3&      --  & 2.80e-2  &      --\\
   &2.50e-1  & 5.12e-2 &  1.97 & 1.59e-3 &  3.56 &  3.61e-4 &  3.98 &  6.66e-3  & 2.07\\
 $k=2$  &6.25e-2  & 1.28e-2 &  2.00  & 1.54e-4 &  3.37 &  2.25e-5 &  4.00  & 1.64e-3  & 2.02\\
   &3.13e-2&  3.19e-3 &  2.00 &  1.67e-5 &  3.20 &  1.40e-6 &  4.01  & 4.08e-4 &  2.01\\
   &1.56e-2 &  7.97e-4 &  2.00  & 1.94e-6 &  3.10 &  8.72e-8 &  4.00  & 1.02e-4 &  2.00\\
   \hline
 \end{tabular}
 \end{threeparttable}
\end{table}

{\it Example 2:} We consider an circular interface problem on the domain $\Omega=(0,1)^2$.  Here $\Omega_1$ is the disc centered at the point $(0.5,0.5)$
with radius $r=0.25$, and $\Omega_2=\Omega\backslash\Omega_1$.
The coefficients are taken as
\[
     a_{1}=a_{2}=2+\sin(x+y), \ \ {\bf b}_1={\bf b}_2=(x,y),\ \ c_1=c_2=4+x.
\]
 The analytical solution to the interface problem is
 \[
   u=\left\{\begin{array}{ll}
  \sin(x+y)+\cos(x+y)+5, & \text{if} \ (x,y)\in\Omega_1,\\
  x+y+1, & \text{if} \ (x,y)\in\Omega_2.
  \end{array}
  \right.
 \]
We plot in Figure \ref{fig:3} the interface and subdomains (left), the
initial mesh (middle), and the refined mesh generated from twice refinement of the initial mesh (right), respectively. The surface plot of the approximate solution $u_h$ calculated by the PDWG method with $k=1$ on the finest mesh is shown in Figure \ref{fig:4}.
\begin{figure}[htb]
  \centering
   \includegraphics[width=.32\textwidth]{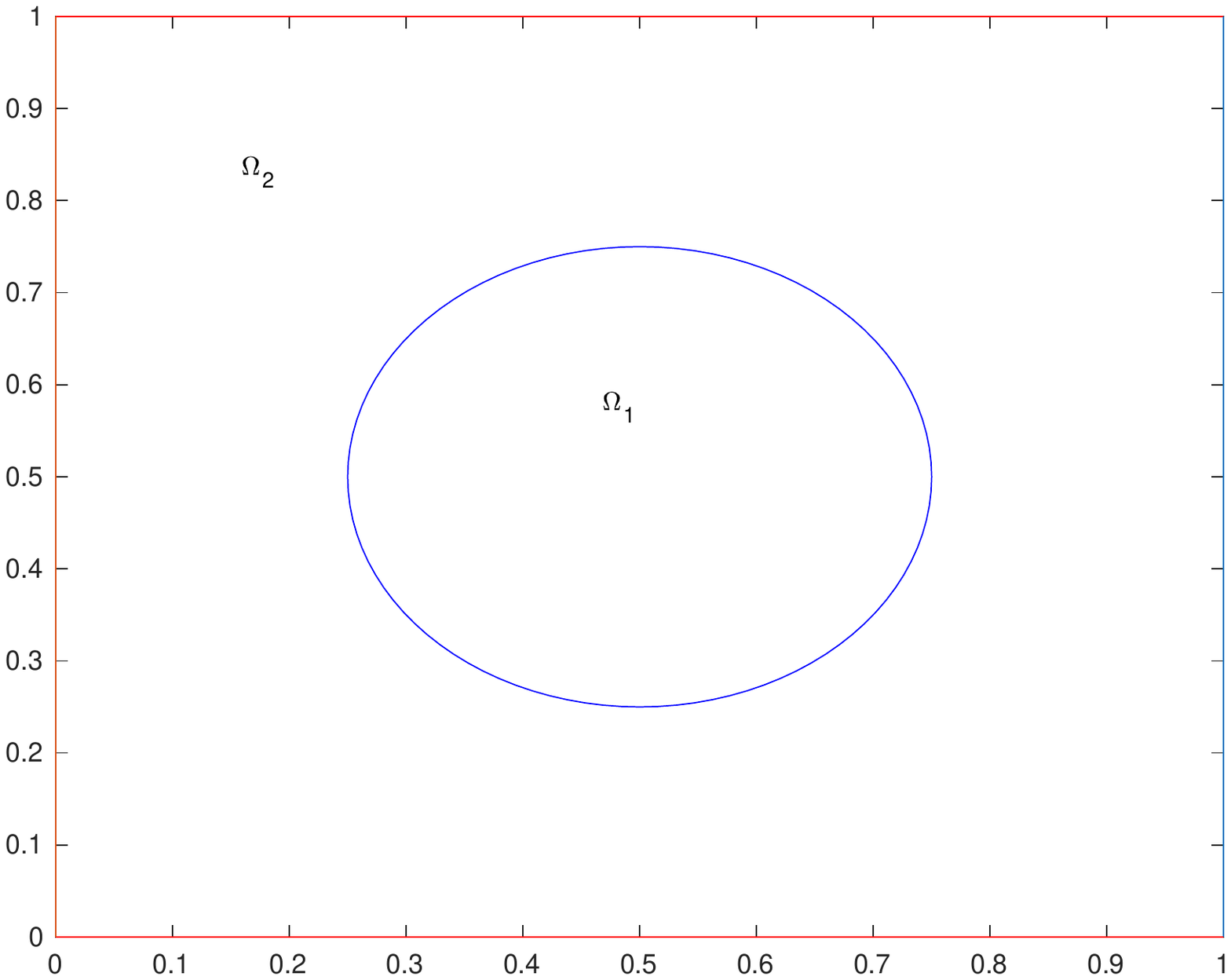}
  \includegraphics[width=.32\textwidth]{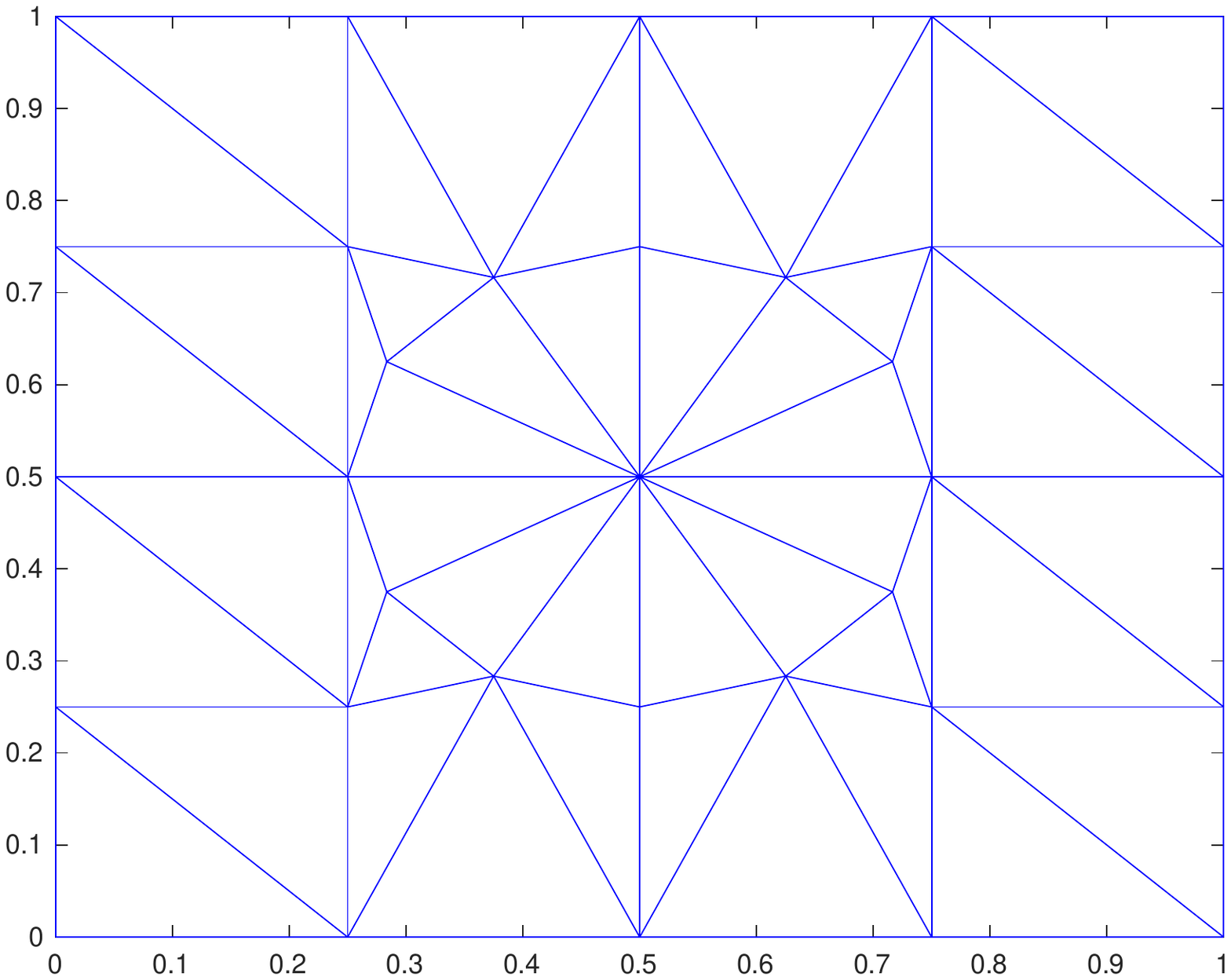}~
  \includegraphics[width=.32\textwidth]{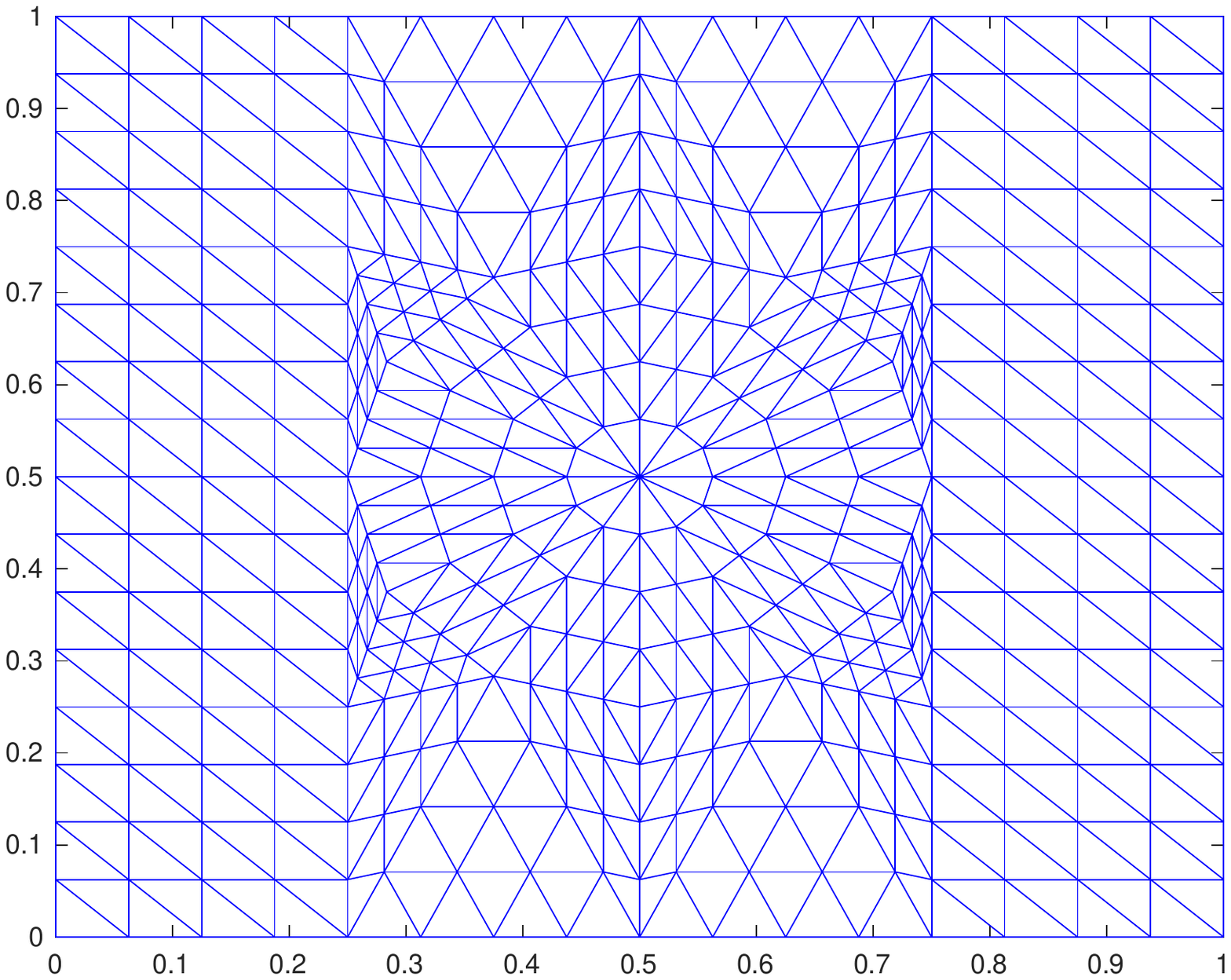}
 \caption{The  interface and subdomains (left) and the initial mesh (middle) and the refined mesh from the twice refinement of  the initial mesh (right).}
  \label{fig:3}
\end{figure}

\begin{figure}[htb]
  \centering
  \includegraphics[width=.6\textwidth]{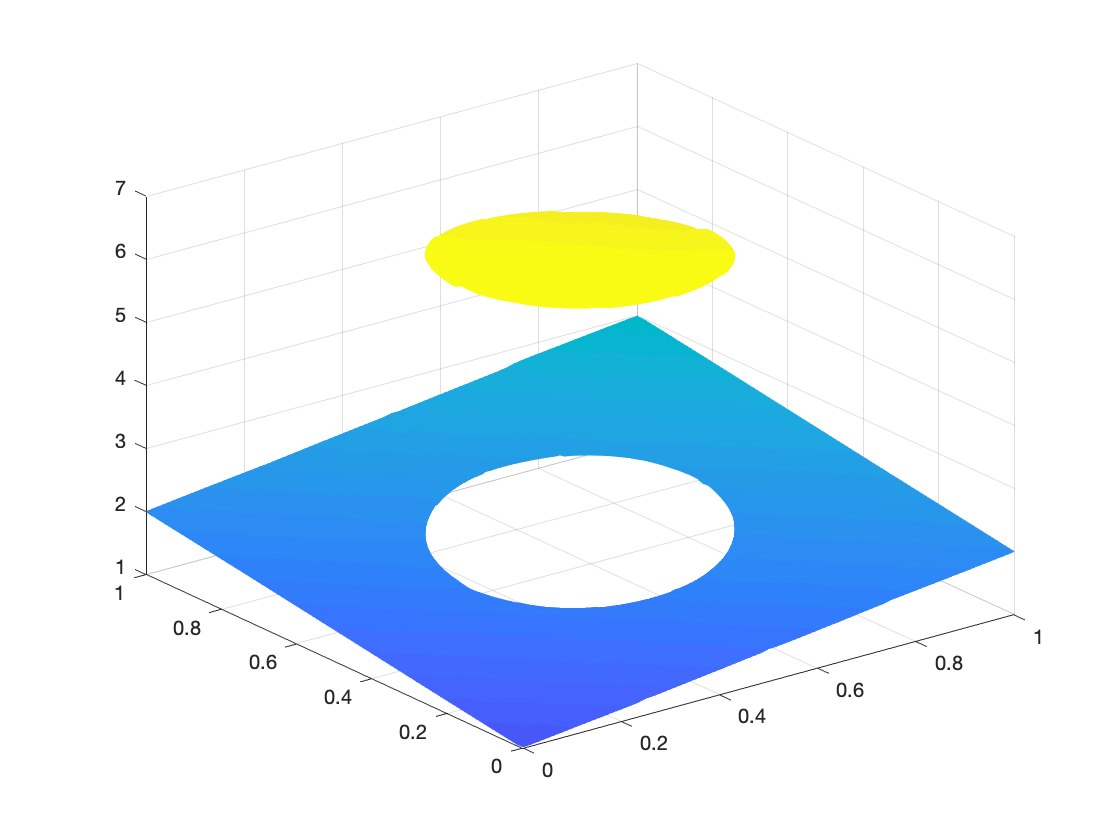}~
 \caption{Surface plot of the numerical solution $u_h$ calculated by the PDWG method of Example 2.}
  \label{fig:4}
\end{figure}

We present in Table \ref{3} the approximation errors and corresponding convergence rates for the primal variable $u_h$ and dual variable $\lambda_h$, from which
we observe a convergence rate of $\mathcal{O}(h^k)$ for both $\|u-u_h\|_0$ and $\|\lambda_h\|_w$. In other words, the error bound given in \eqref{erres} is sharp. Analogous to Example 1, we see the error $\|\lambda_0\|_1$ converges to zero with an order of $k+1$ for both linear and quadratic PDWG methods. Table \ref{3} also shows a convergence of  $\lambda_0$
with order $(k+1)$ for $k=1$ and $(k+2)$ for $k=2$ in $L^2$ norm.

\begin{table}[htbp]\caption{Errors and  convergence rates of the linear and quadratic PDWG methods for Example 2.}\label{3}\centering
\begin{threeparttable}
        \begin{tabular}{c |c |c | c | c | c | c | c | c | c| }
        \hline
  & $h$ &   $\|\lambda_h\|_w$ & rate& $\|\lambda_0\|_1$  & rate & $\|\lambda_0\|_0$ & rate & $\|u-u_h\|_0$ & rate \\
       \hline \cline{2-10}
   &1.2941e-1 &  6.43e-1  &     -- &  3.02e-1  &   --  & 2.15e-2 &    --  & 7.50e-2  &    -- \\
   &6.4705e-2  & 3.55e-1 &  0.86 &  9.31e-2  & 1.70 &  5.17e-3 &  2.06  & 3.75e-2 &  0.99\\
  $k=1$ &3.2352e-2  & 1.98e-1  & 0.84 &  2.97e-2  & 1.65 &  1.42e-3 &  1.87 &  1.77e-2 &  1.09\\
   &1.6176e-2  & 1.07e-1 &  0.89 &  8.85e-3  & 1.75 &  3.80e-4 &  1.90  & 8.35e-3 &  1.09\\
   &8.0881e-3  & 5.59e-2 &  0.94 &  2.43e-3  & 1.87  & 9.98e-5  & 1.93  & 4.04e-3 &  1.05\\
 \hline
  & $h$ &   $\|\lambda_h\|_w$ & rate& $\|\lambda_0\|_1$  & rate & $\|\lambda_0\|_0$ & rate & $\|u-u_h\|_0$ & rate \\
       \hline \cline{2-10}
   &1.2941e-1 &  4.51e-2 &      -- &  6.20e-4  &    -- &  5.44e-4  &     -- &  3.28e-3 &      -- \\
   &6.4705e-2 &  1.14e-2 &  1.99 &  5.48e-5  & 3.50 &  3.43e-5  & 4.00 &  8.30e-4 &  1.98\\
 $k=2$  &3.2352e-2 &  2.94e-3 &  1.95 &  5.20e-6  & 3.40 &  2.15e-6  & 4.00 &  2.08e-4 &  2.00\\
   &1.6176e-2 &  7.24e-4 &  2.02 &  5.38e-7  & 3.27 &  1.35e-7  & 4.00  & 5.17e-5  & 2.01\\
  & 8.0881e-3 &  1.84e-4 &  1.97 &  6.20e-8  & 3.12 &  8.49e-9  & 3.99  & 1.30e-5  & 2.00\\
  \hline
 \end{tabular}
 \end{threeparttable}
\end{table}

{\it Example 3:} The interface problem \eqref{model}-\eqref{model2} is defined on the domain $\Omega=(0,1)^2$ with a closed interface $\Gamma$ parameterized as follows
\[
    r=0.5+\frac{3\sin(3\theta)}{4}.
\]
The subdomain $\Omega_1$ is given by the region bounded by the curve $\Gamma$ and $\Omega_2=\Omega\backslash\Omega_1$ is the portion of the domain outside $\Gamma$.
The PDE coefficients are given by
\[
     a_{1}=1+x+y, \ \ a_{2}=1, \ \ {\bf b}_1={\bf b}_2=(1,1+y),\ \ c_1=c_2=2.
\]
 The exact solution to the elliptic problem is given as
 \[
   u=\left\{\begin{array}{ll}
  e^x\cos(y)+10, & \text{if} \ (x,y)\in\Omega_1,\\
  5e^{-x^2-y^2}, & \text{if} \ (x,y)\in\Omega_2.
  \end{array}
  \right.
 \]

The interface and subdomains, the initial mesh, and the refined mesh after two successive refinements of the initial mesh are shown in Figure \ref{fig:5}. The numerical solution $u_h$
calculated by the PDWG method with $k=2$ on the refined mesh are depicted in Figure \ref{fig:6}.
The numerical errors of the linear and quadratic PDWG methods are reported in Table \ref{4}. It can be seen that the theoretical convergence (i.e., $\mathcal{O}(h^{k})$ for both $\|u-u_h\|_0$ and $\|\lambda_h\|_w$) is achieved in this numerical test. Moreover, a convergence of
$\mathcal{O}(h^{k+1})$ for $\|\lambda_0\|_1$, and $\mathcal{O}(h^{k+1})$ and $\mathcal{O}(h^{k+2})$   for the error $\|\lambda_0\|_0$ is observed for the case of $k=1$ and $k=2$, respectively.

\begin{figure}[htb]
  \centering
   \includegraphics[width=.32\textwidth]{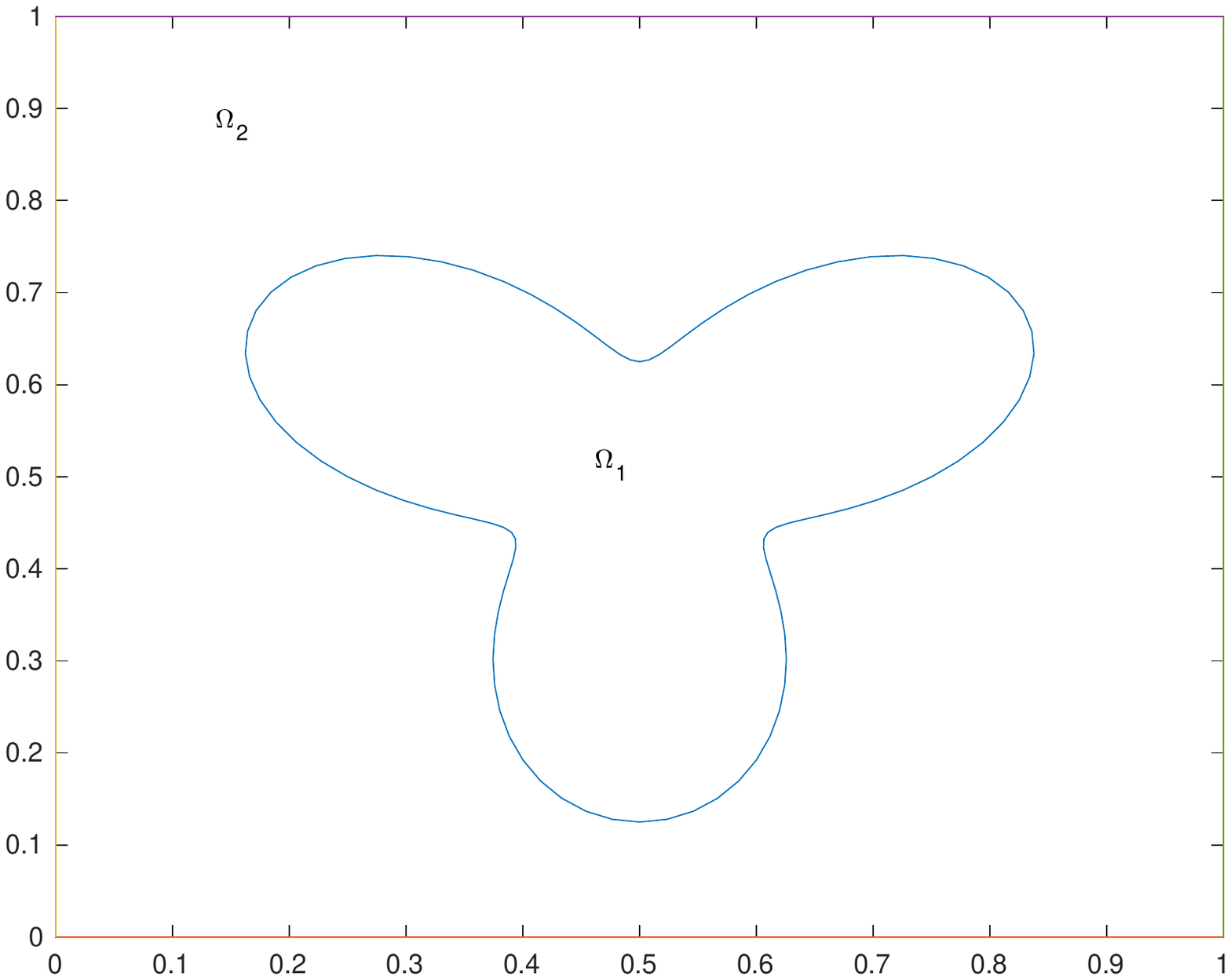}
  \includegraphics[width=.32\textwidth]{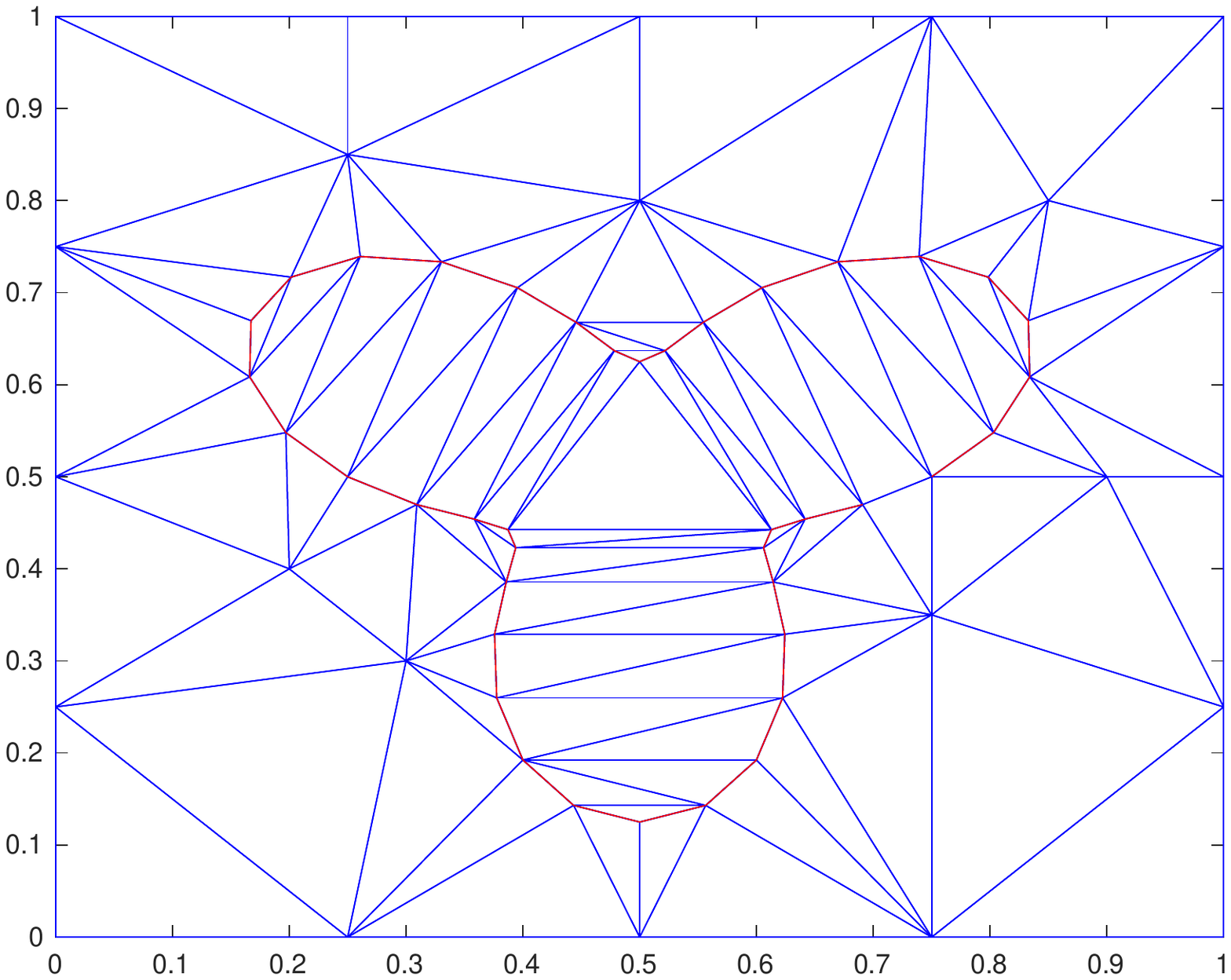}~
  \includegraphics[width=.32\textwidth]{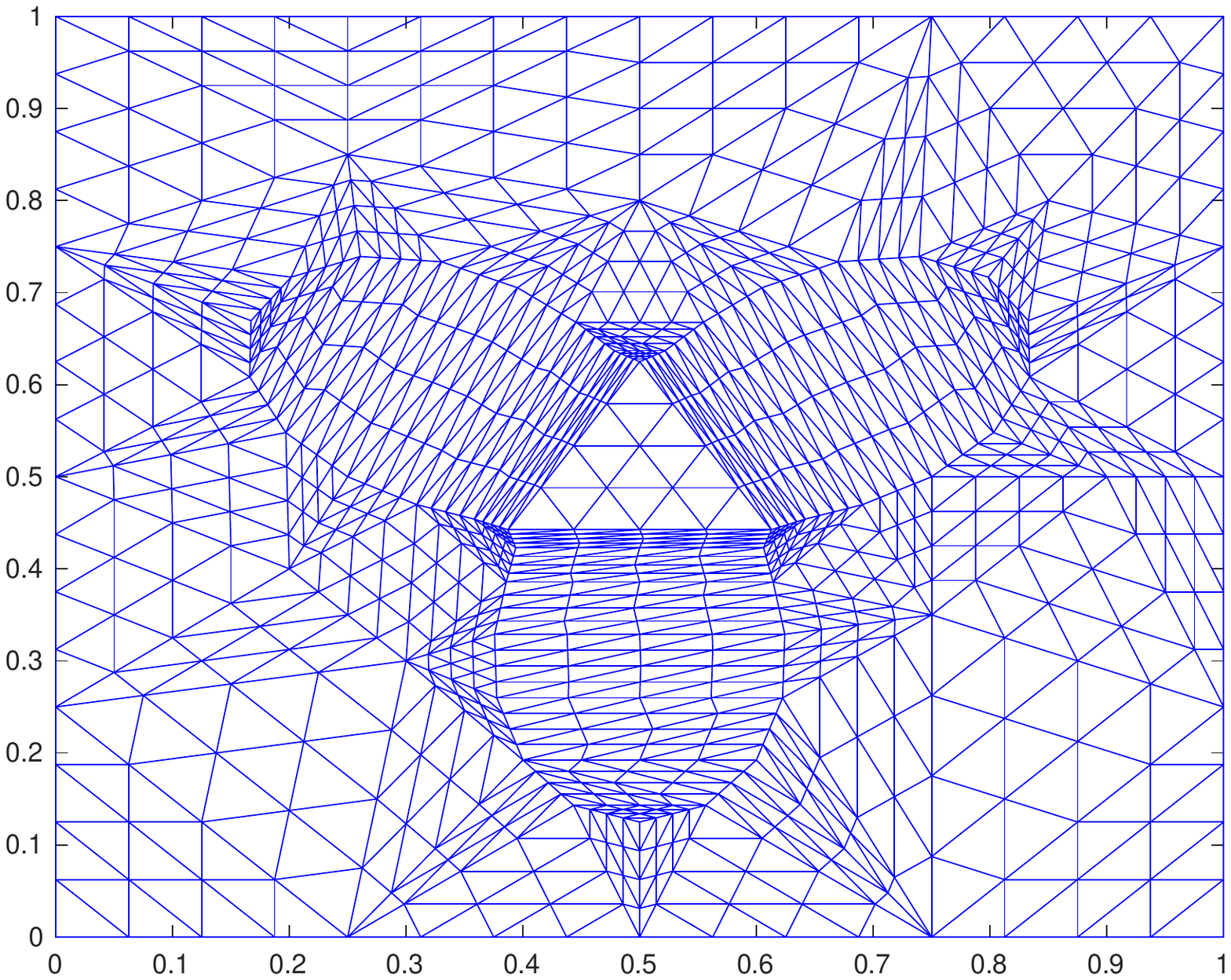}
\caption{The  interface and subdomains (left) and the initial mesh (middle) and the refined mesh (right).}
  \label{fig:5}
\end{figure}

\begin{figure}[htb]
  \centering
  \includegraphics[width=.6\textwidth]{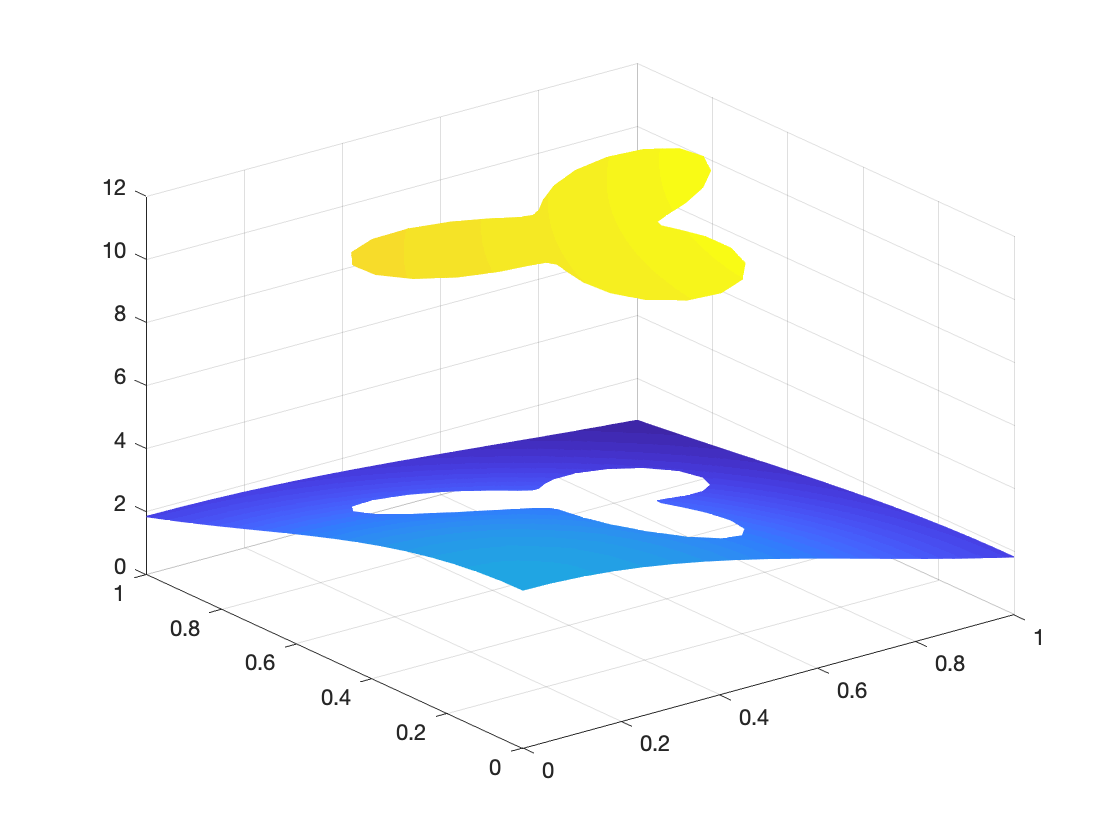}~
 \caption{Surface plot of the numerical solution $u_h$ calculated by the PDWG method of Example 3.}
  \label{fig:6}
\end{figure}

\begin{table}[htbp]\caption{Errors and  convergence rates of the linear and quadratic PDWG methods for Example 3.}\label{4}\centering
\begin{threeparttable}
        \begin{tabular}{c |c |c | c | c | c | c | c | c | c| }
        \hline
  & $h$ &   $\|\lambda_h\|_w$ & rate& $\|\lambda_0\|_1$  & rate & $\|\lambda_0\|_0$ & rate & $\|u-u_h\|_0$ & rate \\
       \hline \cline{2-10}

   &3.5355e-1 &  1.35e-0 &    -- &  3.77e-1  &  --  & 6.52e-2 &     --    &  2.38e-1  &     --  \\
   &1.7678e-1 &  7.32e-1 &  0.81 &  1.26e-1 &  1.58  & 1.65e-2 &  1.98 &  1.25e-1  & 0.93  \\
 $k=1$  &8.8388e-2 &  4.01e-1 &  0.87 &  4.14e-2 &  1.61  & 4.37e-3  & 1.92 &  6.63e-2  &  0.92 \\
   &4.4194e-2 &  2.16e-1 &  0.89 &  1.29e-2 &  1.68 &  1.17e-3  & 1.90 &  3.54e-2  & 0.91 \\
  & 2.2097e-2 &  1.14e-1 &  0.92 &  3.75e-3 &  1.78 &  3.07e-4  & 1.93 &  1.90e-2  & 0.90 \\
  \hline
 & $h$ &   $\|\lambda_h\|_w$ & rate& $\|\lambda_0\|_1$  & rate & $\|\lambda_0\|_0$ & rate & $\|u-u_h\|_0$ & rate \\
       \hline \cline{2-10}
       & 3.5355e-1 &  2.01e-1 &     -- &  1.09e-2  &    -- &  5.82e-3  &   -- &  1.66e-2 &  --\\
       & 1.7678e-1 &  5.12e-2 &  1.97 &  9.70e-4  & 3.50  & 3.76e-4 &  3.95  & 4.13e-3 &  2.01\\
  $k=2$ & 8.8388e-2 &  1.28e-2 &  2.00 &  9.04e-5  & 3.42 &  2.37e-5 &  3.99 &  1.02e-3 &  2.02\\
        &4.4194e-2 &  3.21e-3 &  2.00 &  9.70e-6  & 3.22 &  1.48e-6 &  4.00  & 2.53e-4 &  2.01\\
       & 2.2097e-2&   8.02e-4 &  2.00 &  1.14e-6  & 3.01 &  9.28e-8 &  4.00  & 6.30e-5 &  2.00\\
   \hline
 \end{tabular}
 \end{threeparttable}
\end{table}

{\it Example 4:}
The interface $\Gamma$ on the domain $\Omega=(0,1)^2$ is characterized by the following equation in the polar coordinates:
\begin{eqnarray*}
   && x(\theta)=(1/2+1/2\cos(m\theta)\sin(n\theta))\cos(\theta),\\
   &&  y(\theta)=(1/2+1/2\cos(m\theta)\sin(n\theta))\sin(\theta),
\end{eqnarray*}
where $m=2$ and $n=6$. The subdomain $\Omega_1$ is the region inside $\Gamma$ and $\Omega_2=\Omega\backslash\Omega_1$. The coefficients in the elliptic interface problem are given by
\[
     a_{1}=(xy+2)/5, \ \ a_{2}=(x^2-y^2+3)/7, \ \ {\bf b}_1=(0,1),\ \ {\bf b}_2=(1,0),\ \ c_1=2,\ c_2=1.
\]
 The exact solution to the interface problem is
 \[
   u=\left\{\begin{array}{ll}
  x+y+2, & \text{if} \ (x,y)\in\Omega_1,\\
  0.5\sin(x+y)+0.5\cos(x+y)+0.3, & \text{if} \ (x,y)\in\Omega_2.
  \end{array}
  \right.
 \]

The interface and subdomains, the initial mesh, and the refined mesh after two successive refinement of the initial mesh are shown in Figure \ref{fig:7}. The PDWG solution $u_h$
on the finest mesh are depicted in Figure \ref{fig:8}. The numerical errors of the linear and quadratic PDWG method are reported in Table \ref{5}. The numerical convergence rate for
$\|\lambda_h\|_w, \|u-u_h\|_0$, and $\|\lambda_0\|_1$ are seen to be $\mathcal{O}(h^{k}), \mathcal{O}(h^{k}), \mathcal{O}(h^{k+1})$, respectively. Once again, the numerical experiment suggests a convergence at the optimal order of $\mathcal{O}(h^{k+1})$ for $\|\lambda_0\|_0$ for the linear PDWG method and a superconvergence of $\mathcal{O}(h^{k+2})$ for the quadratic PDWG method.

 \begin{figure}[htb]
  \centering
   \includegraphics[width=.32\textwidth]{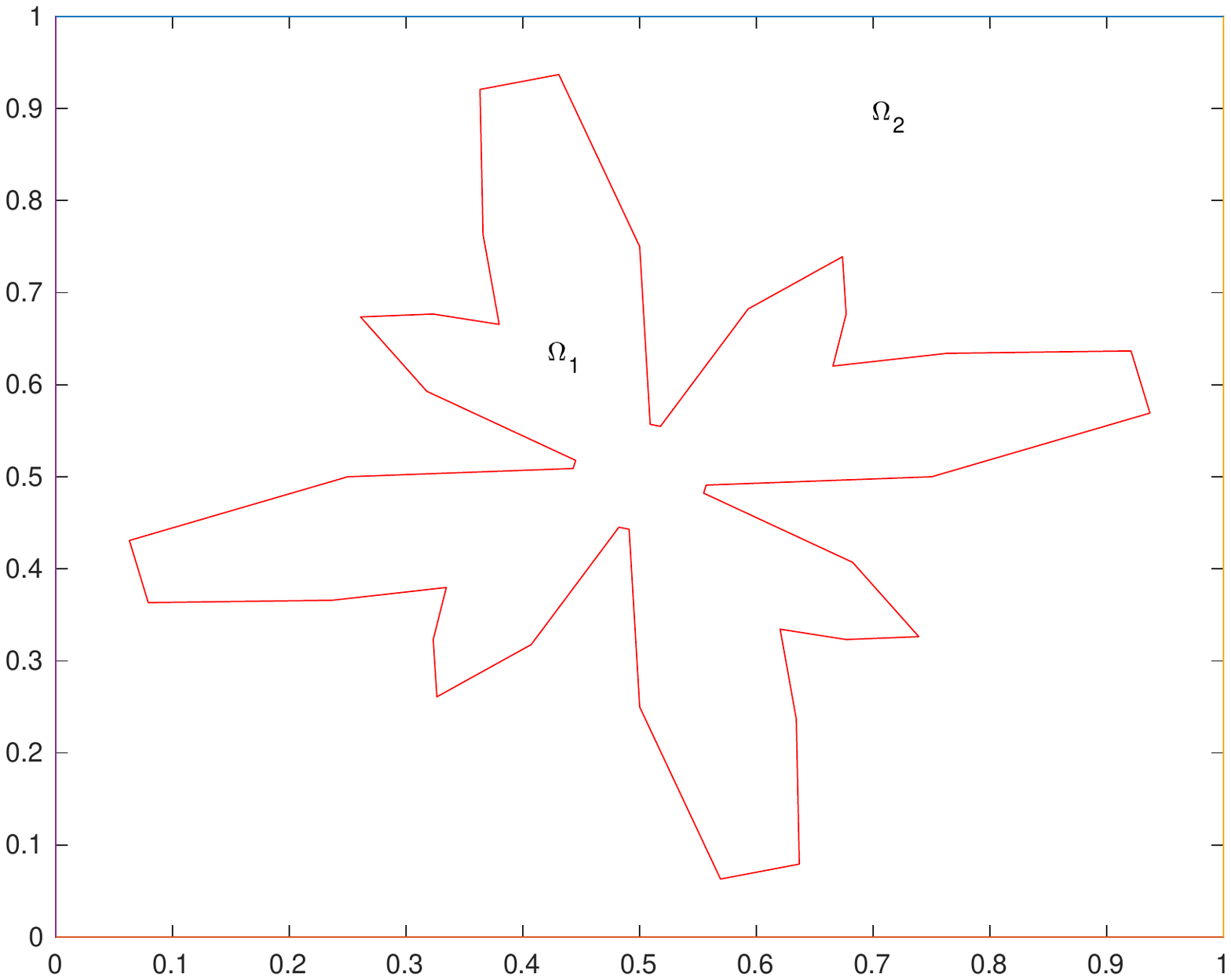}
  \includegraphics[width=.32\textwidth]{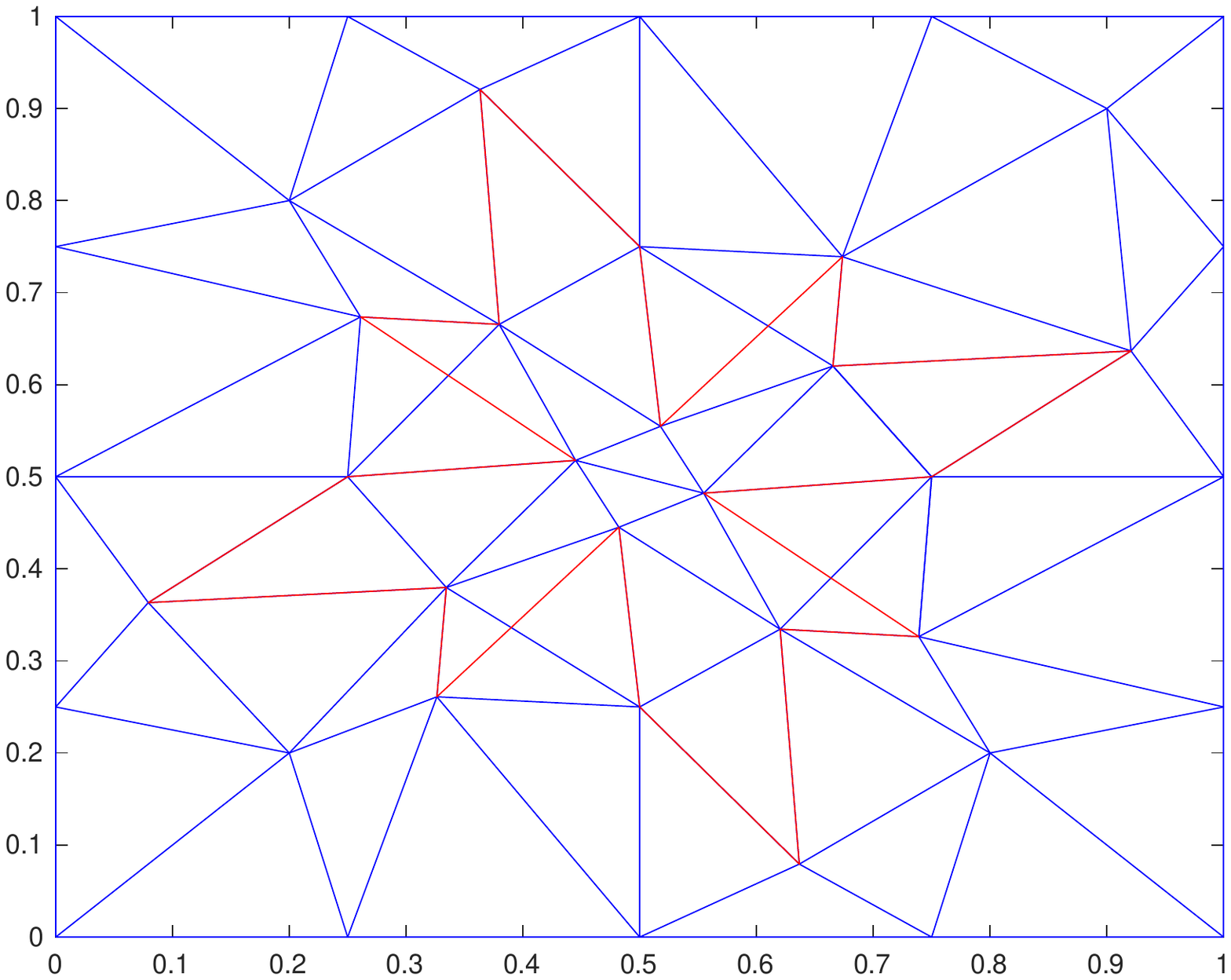}~
  \includegraphics[width=.32\textwidth]{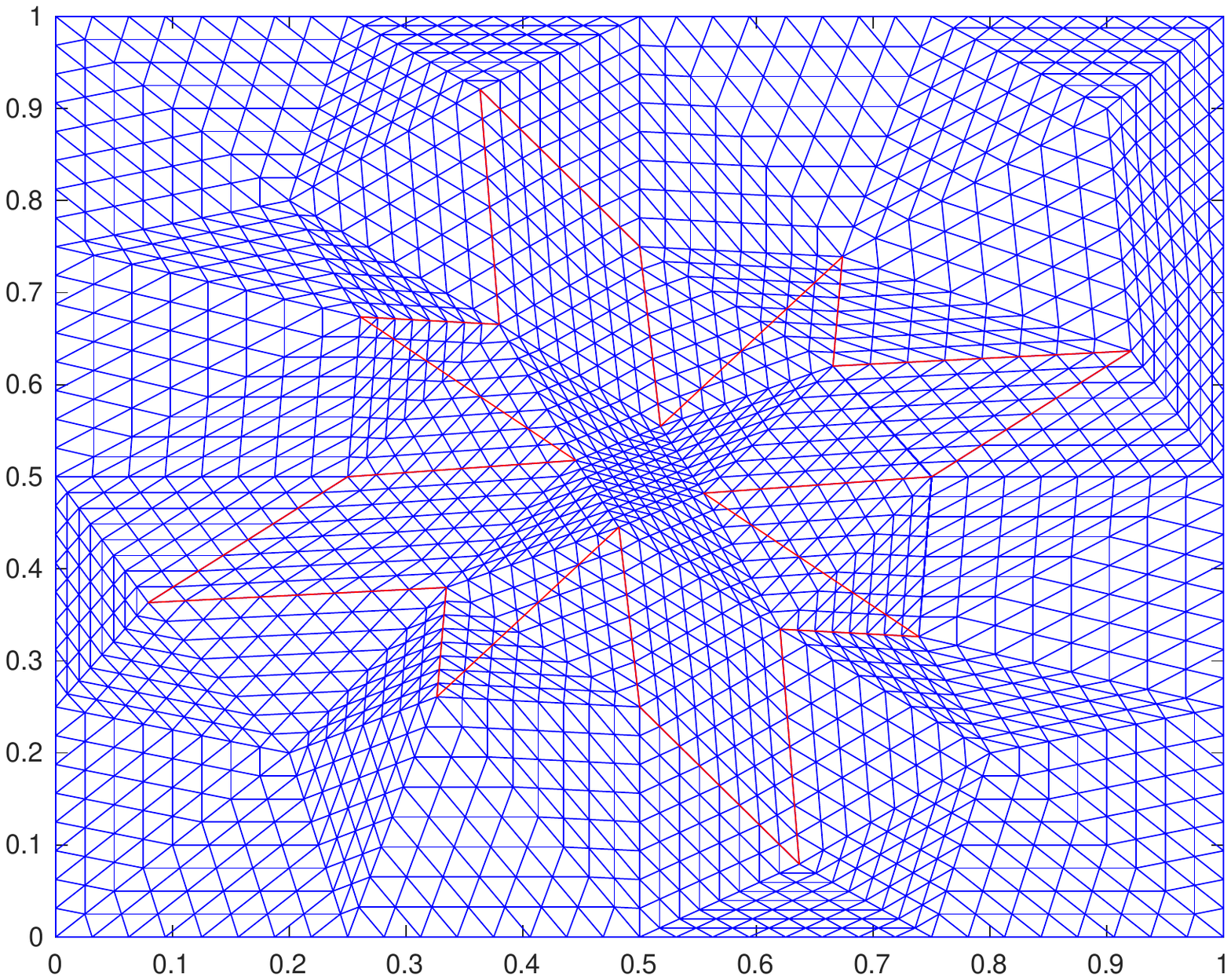}
\caption{ The  interface and subdomains (left) and the initial mesh (middle) and the refined mesh (right).}
  \label{fig:7}
\end{figure}

\begin{figure}[htb]
  \centering
  \includegraphics[width=.6\textwidth]{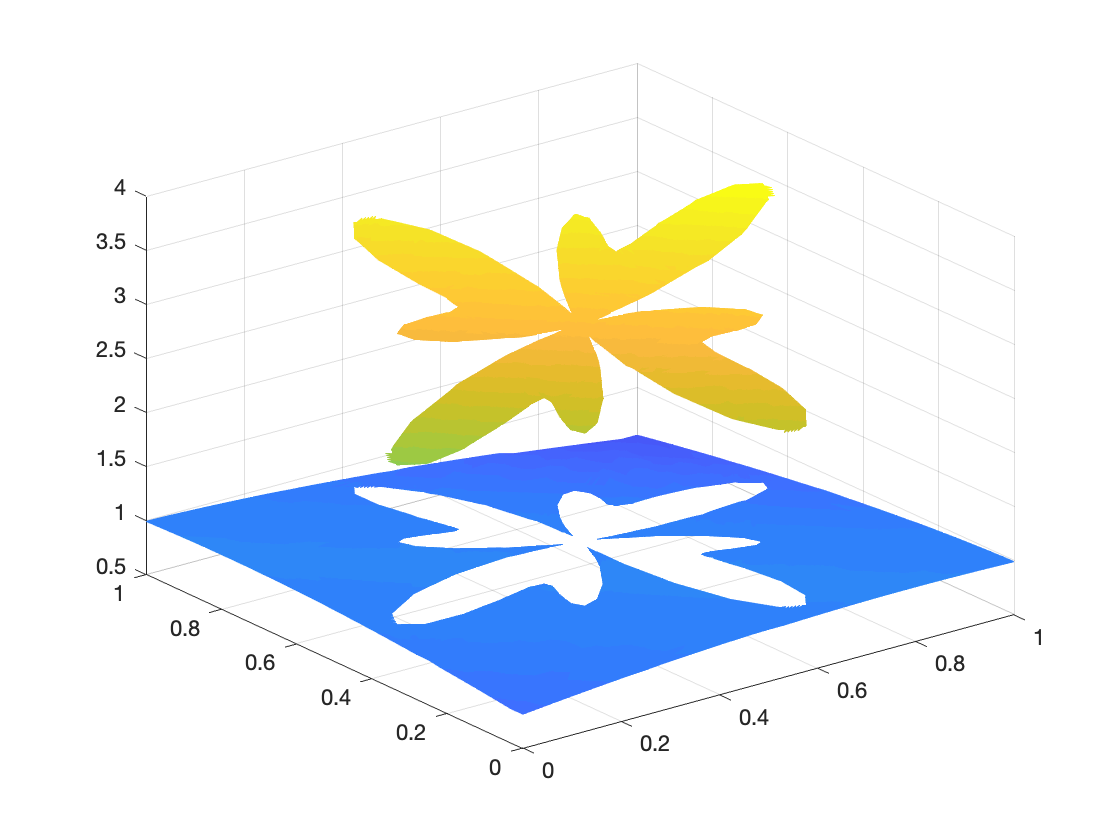}~
 \caption{ Surface plot of the numerical solution $u_h$ calculated by the PDWG method of Example 4.}
  \label{fig:8}
\end{figure}

\begin{table}[htbp]\caption{Errors and  convergence rates of the linear and quadratic PDWG methods for Example 3.}\label{5}\centering
\begin{threeparttable}
        \begin{tabular}{c |c |c | c | c | c | c | c | c | c| }
        \hline
  & $h$ &   $\|\lambda_h\|_w$ & rate& $\|\lambda_0\|_1$  & rate & $\|\lambda_0\|_0$ & rate & $\|u-u_h\|_0$ & rate \\
       \hline \cline{2-10}
        &3.5355e-1 &  1.58e-1 &   --  &  2.94e-2   &       --   & 7.35e-3 &     --  &  4.02e-2  &    --\\
        &1.7678e-1 &  8.12e-2 &  0.96   & 9.08e-3  &  1.70   & 1.79e-3   & 2.04   & 1.94e-2  &  1.05\\
  $k=1$ & 8.8388e-2&   4.15e-2&   0.97  &  2.93e-3 &   1.63  &  4.59e-4  &  1.97 &   9.55e-3  &  1.03\\
        &4.4194e-2 &  2.14e-2 &  0.96 &  8.58e-4   & 1.77  &  1.20e-4 &   1.93   & 4.77e-3   & 1.00\\
        &2.2097e-2 &  1.09e-2 &  0.97  &  2.35e-4  &  1.87  &  3.06e-5  &  1.98  &  2.51e-3  &  0.93\\
  \hline
& $h$ &   $\|\lambda_h\|_w$ & rate& $\|\lambda_0\|_1$  & rate & $\|\lambda_0\|_0$ & rate & $\|u-u_h\|_0$ & rate \\
\hline \cline{2-10}
 &3.5355e-01 &  1.42e-2  &      --  & 1.78e-3    &       --  & 3.11e-4   &    --  & 2.25e-3  &  --\\
 &  1.7678e-01 &  3.59e-3 &  1.99&   1.93e-4 &  3.21 &  1.97e-5 &  3.98 &  5.53e-4 &  2.02\\
$k=2$ &  8.8388e-02 &  8.97e-4&   2.00 &  2.23e-5 &  3.11 &  1.24e-6 &  3.99 &  1.36e-4 &  2.02\\
 &  4.4194e-02 &  2.24e-4 &  2.00 &  2.70e-6 &  3.05 &  7.76e-8 &  4.00&   3.39e-5 &  2.01\\
 &  2.2097e-02 &  5.61e-5 &  2.00  & 3.31e-7 &  3.03 &  4.86e-9 &  4.00 &  8.44e-6 &  2.01\\
  \hline
 \end{tabular}
 \end{threeparttable}
\end{table}

{\it Example 5:} We consider an interface problem on the domain $\Omega=(0,1)^2$ with
an interface $\Gamma$ parameterized in the polar angle $\theta$ as follows
\[
    r=0.5+\frac{\sin(5\theta)}{7}.
\]
The subdomain $\Omega_1$ is the part inside $\Gamma$ and $\Omega_2=\Omega\backslash\Omega_1$ is the  part outside $\Gamma$. The coefficients in the PDE are given by
\[
     a_{1}=0.01, \ \ a_{2}=0.1, \ \ {\bf b}_1= {\bf b}_2=(0,0),\ \ c_1=c_2=0.
\]
The exact solution to the elliptic interface problem is
 \[
   u=\left\{\begin{array}{ll}
  e^{(2x-1)^2+(2y-1)^2}, & \text{if} \ (x,y)\in\Omega_1,\\
  0.1(x^2+y^2)^2-0.01{\rm ln}(2\sqrt{x^2+y^2}), & \text{if} \ (x,y)\in\Omega_2.
  \end{array}
  \right.
 \]

Plotted in Figure \ref{fig:9} is the interface and the domain (left),
the initial mesh (middle), and the next level mesh by the refinement of the initial mesh (right).
Figure \ref{fig:10} shows the surface plot of the numerical solution $u_h$ calculated by the PDWG method with $k=1$. Table \ref{6} reports the approximation error and the corresponding rate of convergence for $u_h$ and $\lambda_h$. An optimal order of convergence of $\mathcal{O}(h^{k})$ for $\|u-u_h\|_0$ and $\|\lambda_h\|_w$ is observed, which is in good consistency with our theoretical
findings in Theorem \ref{theoestimate}. Table \ref{6} further suggests a convergence for $\|\lambda_0\|_1$ at the rate of $\mathcal{O}(h^{k+1})$, and a convergence for $\|\lambda_0\|_0$ at the rates of $\mathcal{O}(h^{k+1})$ and $\mathcal{O}(h^{k+2})$ for the linear and quadratic PDWG methods, respectively.

 \begin{figure}[htb]
  \centering
   \includegraphics[width=.32\textwidth]{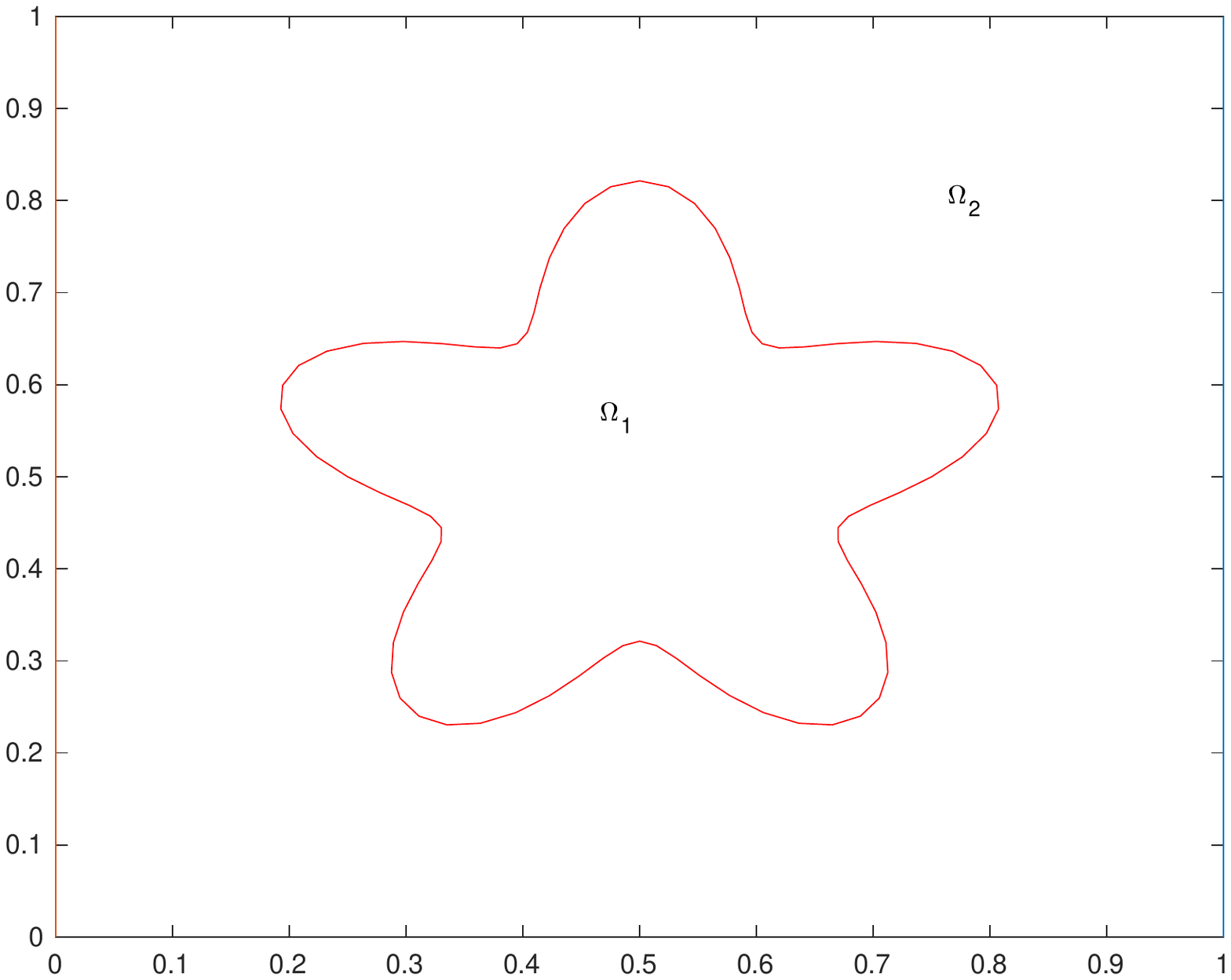}
  \includegraphics[width=.32\textwidth]{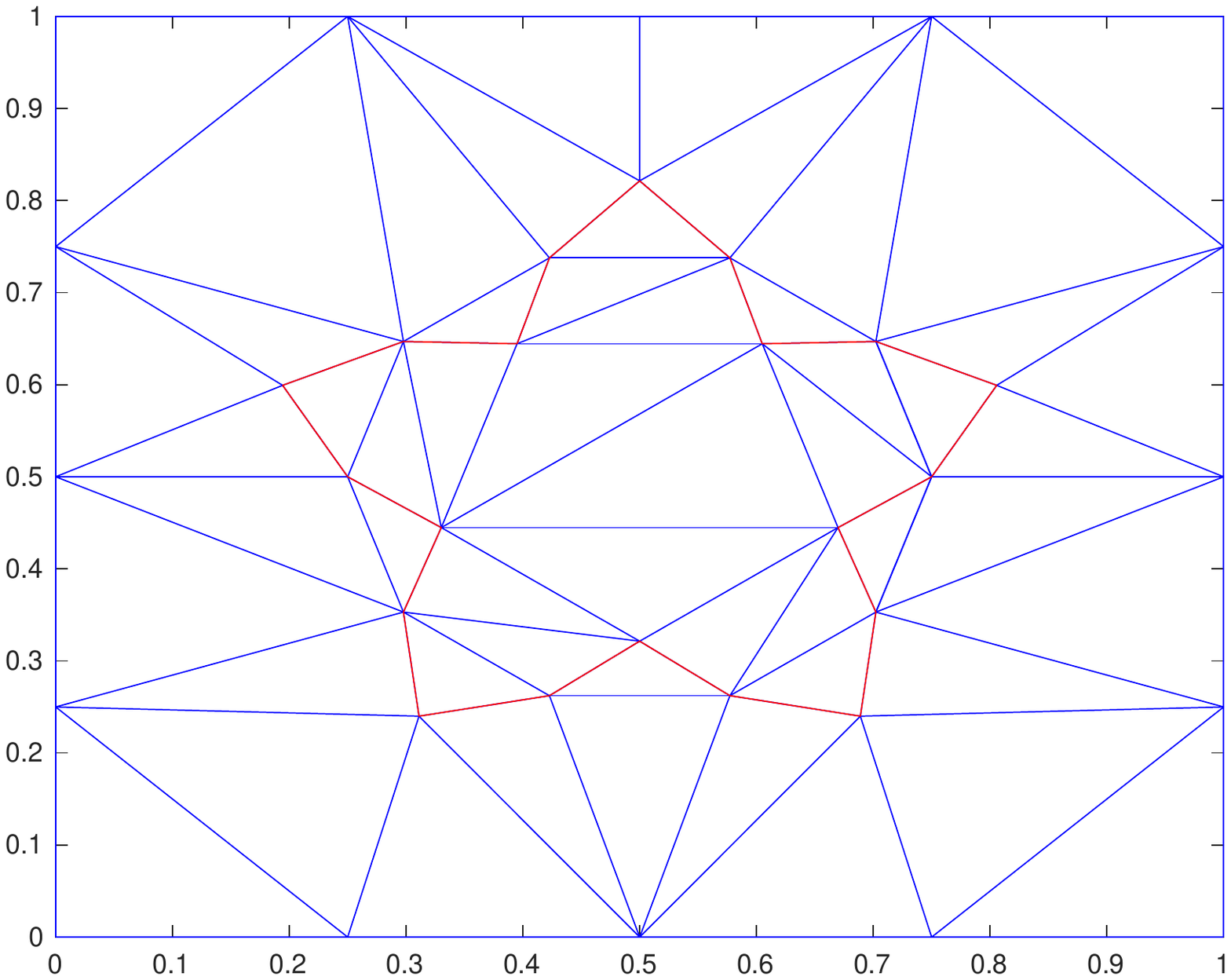}~
  \includegraphics[width=.32\textwidth]{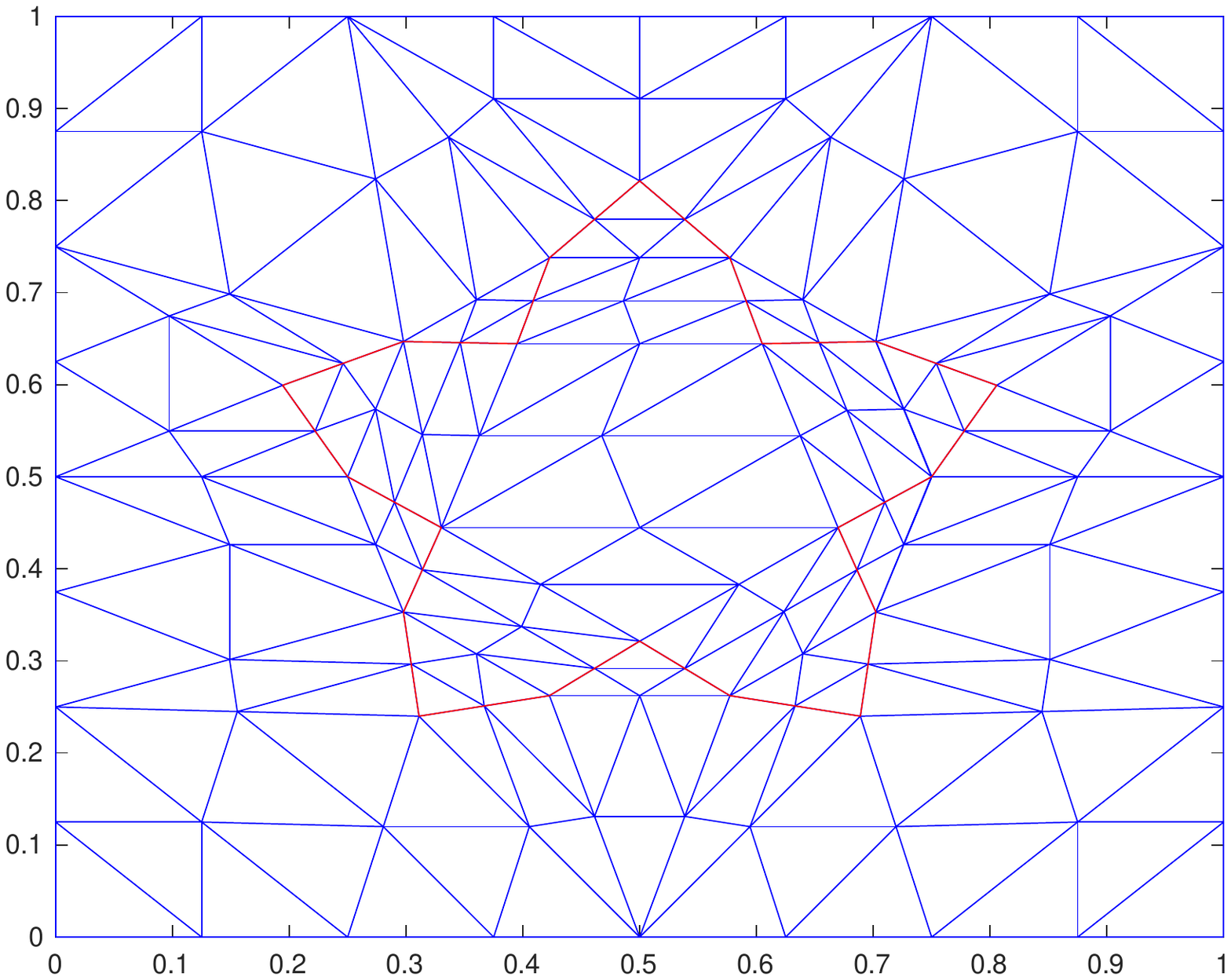}
\caption{The interface and subdomains (left) and the initial mesh (middle) and the next level of mesh after one refinement (right).}
  \label{fig:9}
\end{figure}

\begin{figure}[htb]
  \centering
  \includegraphics[width=.6\textwidth]{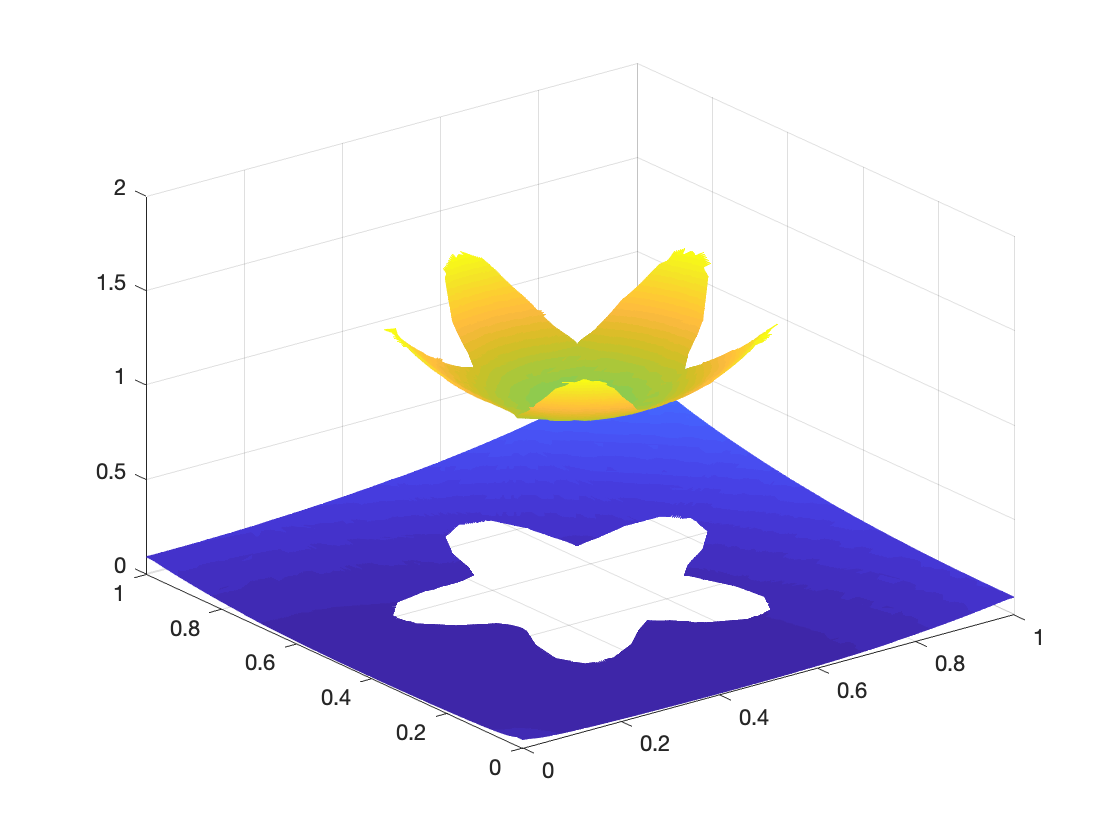}~
 \caption{ Surface plot of the numerical solution $u_h$ calculated by the PDWG method of Example 5.}
  \label{fig:10}
\end{figure}

\begin{table}[htbp]\caption{Errors and  convergence rates of the linear and quadratic PDWG methods for Example 3.}\label{6}\centering
\begin{threeparttable}
        \begin{tabular}{c |c |c | c | c | c | c | c | c | c| }
        \hline
  & $h$ &   $\|\lambda_h\|_w$ & rate& $\|\lambda_0\|_1$  & rate & $\|\lambda_0\|_0$ & rate & $\|u-u_h\|_0$ & rate \\
       \hline \cline{2-10}
        &1.88e-1&   4.77e-2  &   -- &  1.20e-1 &     -- &  1.13e-2  &   --&   7.18e-2 &          --\\
        & 9.38e-2 &  2.74e-2 &  0.80 &  5.66e-2 &  1.08 &  5.07e-3  & 1.16&   3.35e-2 &  1.10\\
 $k=1$  & 4.69e-2 &  1.54e-2 &  0.83 &  1.70e-2 &  1.74 &  1.55e-3 &  1.71&   2.01e-2 &  0.74\\
        & 2.34e-2&   8.33e-3 &  0.89 &  4.85e-3 &  1.80 &  4.40e-4 &  1.81 &  9.26e-3 &  1.12\\
        & 1.17e-2 &  4.26e-3 &  0.97 &  1.27e-3 &  1.94&  1.15e-4 &  1.94 &  4.77e-3 &  0.96\\
\hline
& $h$ &   $\|\lambda_h\|_w$ & rate& $\|\lambda_0\|_1$  & rate & $\|\lambda_0\|_0$ & rate & $\|u-u_h\|_0$ & rate \\
       \hline \cline{2-10}
       &  3.8139e-1&    1.69e-2 &       -- &   7.29e-2 &     -- &   5.63e-3 &      -- &   1.12e-2  &     --\\
       &  1.9069e-1 &   6.38e-3 &   1.41&   2.47e-2&    1.56 &   8.29e-4 &   2.76 &   2.97e-3  &  1.91\\
 $k=2$ &  9.5347e-2 &   1.79e-3 &   1.83 &   3.73e-3  &  2.73 &   6.17e-5 &   3.75 &   8.28e-4  &  1.85\\
       &  4.7674e-2 &   5.17e-4 &   1.80 &   5.17e-4 &   2.85 &   4.36e-6 &   3.82 & 2.33e-4  &  1.83\\
       &  2.3837e-2 &   1.58e-4 &   1.71 &   6.91e-5 &   2.90 &   2.92e-7 &   3.90  &  5.42e-5 &   2.11\\
\hline
 \end{tabular}
 \end{threeparttable}
\end{table}

{\it Example 6:} We consider the problem \eqref{model}-\eqref{model2} on the domain $\Omega=(0,1)^2$ with the same interface as that of Example 1; i.e.,
$\Omega_1=(0.25,0.75)^2$, $\Omega_2=\Omega\backslash\Omega_1$, and $\Gamma=\partial\Omega_1$. The coefficients are set as
\[
     a_{1}=2+\sin(x+y), \ \ a_{2}=5, \ \ {\bf b}_1= {\bf b}_2=(0,0),\ \ c_1=c_2=0.4.
\]
 Define
\begin{eqnarray*}
  && \Gamma_1=\{(\frac 14, y): \frac 14\le y\le \frac 34\},\ \ \Gamma_2=\{(\frac 34, y): \frac 14\le y\le \frac 34\},\\
  && \Gamma_3=\{(x,\frac 14): \frac 14\le x\le \frac 34\},\ \ \Gamma_4=\{(x,\frac 34): \frac 14\le x\le \frac 34\}.
\end{eqnarray*}
 We choose the boundary condition $u|_{\partial\Omega}=\frac{1}{5}\sin(x+y)+\cos(x+y)+1$, and
 the following interface data:
\begin{eqnarray*}
  &&  [\![u]\!]_{\Gamma_i}=i,\ \ 1\le i\le 4,\ \  [\![a\nabla u-{\bf b}u]\!]_{\Gamma_1}=(4,0),\ \ [\![a\nabla u-{\bf b}u]\!]_{\Gamma_2}=(2/e^{\frac 34}e^x,0),\\
  &&
  [\![a\nabla u-{\bf b}u]\!]_{\Gamma_3}=(0,6\pi\cos(2\pi y)),\ \ \ [\![a\nabla u-{\bf b}u]\!]_{\Gamma_4}=(1,0).
\end{eqnarray*}

Figure \ref{fig:11} show the plots for the numerical solution $\lambda_h$ (left) and $u_h$ (right) obtained from the PDWG numerical method with $k=1$ for the interface problem when the right-hand side functions are taken as $f_1=f_2=0$. It should be noted that the exact solution to this interface problem is not known, and the interface data for the jump of $u$ is piecewise constant and, therefore, does not have the $H^{\frac12}(\Gamma)$-regularity needed in most other numerical methods.

\begin{figure}[htb]
  \centering
  \includegraphics[width=.42\textwidth]{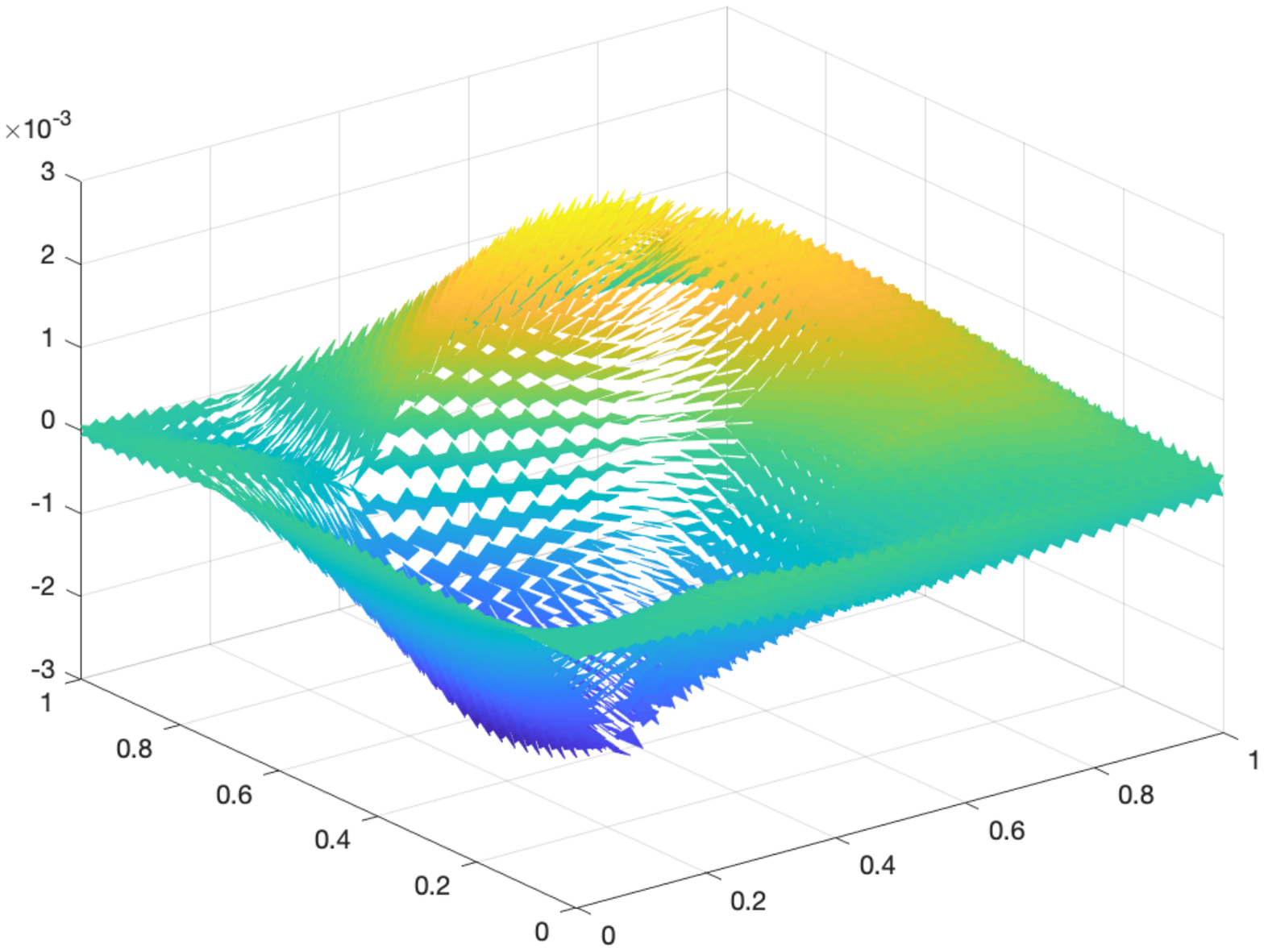}
  \includegraphics[width=.42\textwidth]{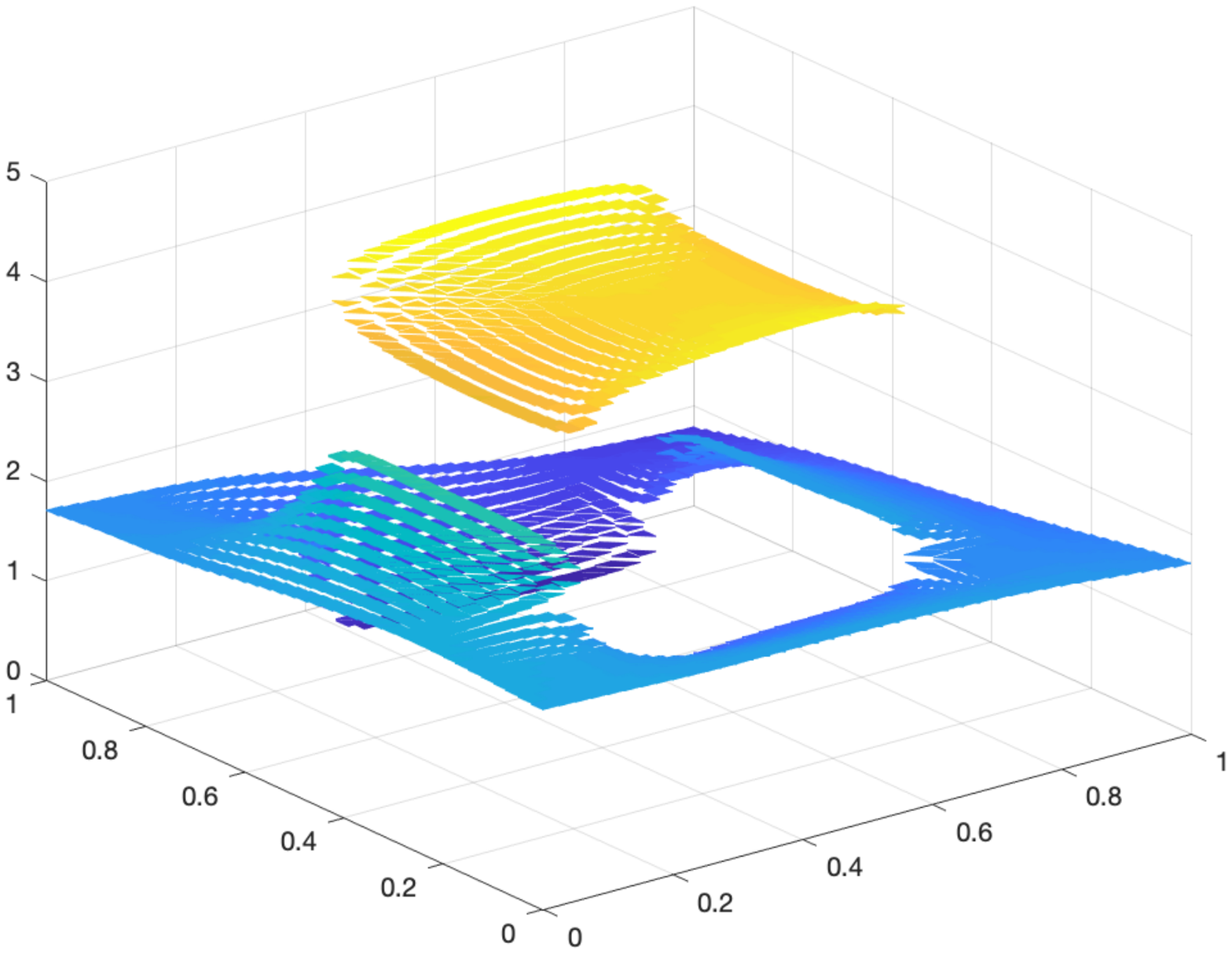}~
 \caption{Surface plot of the numerical solution $\lambda_h$ (left) and $u_h$ (right) calculated by the PDWG method of Example 6 with $f_1=f_2=0, k=1$ and $h=1/40$.}
  \label{fig:11}
\end{figure}
%

%

{\it Example 7:} This example assumes the same interface $\Gamma$ as in Example 6. Here, we take
\[
     a_{1}=1, \ \ a_{2}=100, \ \ {\bf b}_1= {\bf b}_2=(2+y,1+x),\ \ c_1=c_2=0, \ f_1=f_2=0.
\]
The boundary condition and the interface data are chosen as
\begin{eqnarray*}
  &&u|_{\partial\Omega}=\frac 12(x^2+y^3)(\frac 12\sin(x+y)+\frac 13\cos(x+y))-\frac 13 {\rm ln} (x^2+y^2),\ \\
  &&[\![u]\!]_{\Gamma_i}=1, [\![u]\!]_{\Gamma_{i+2}}=0,\  i=1,2,\\
  && [\![a\nabla u-{\bf b}u]\!]_{\Gamma_1}=(\pi\cos(2\pi x),0),\ \  [\![a\nabla u-{\bf b}u]\!]_{\Gamma_2}=(\frac12\sin(x)+\frac 14\cos(x)+y,0),\\
  && [\![a\nabla u-{\bf b}u]\!]_{\Gamma_j}=((y-\frac 14)(y-\frac 34)(\cos(x)+2x),(\sin(x)+x^2)(2y-1)),\ \ \ j=3,4.
\end{eqnarray*}

Figure \eqref{fig:14} shows the plots for the numerical solution $\lambda_h$ (left) and $u_h$ (right) obtained from the PDWG numerical method for the interface problem with $k=2$. For this test case, the exact solution to the interface problem is not known. Furthermore, the interface data for the jump of $u$ is discontinuous by assuming $0$ or $1$ so that the $H^{\frac12}(\Gamma)$-regularity is not satisfied. The PDWG method, however, is applicable and provide meaningful numerical solutions.

%

\begin{figure}[htb]
  \centering
  \includegraphics[width=.42\textwidth]{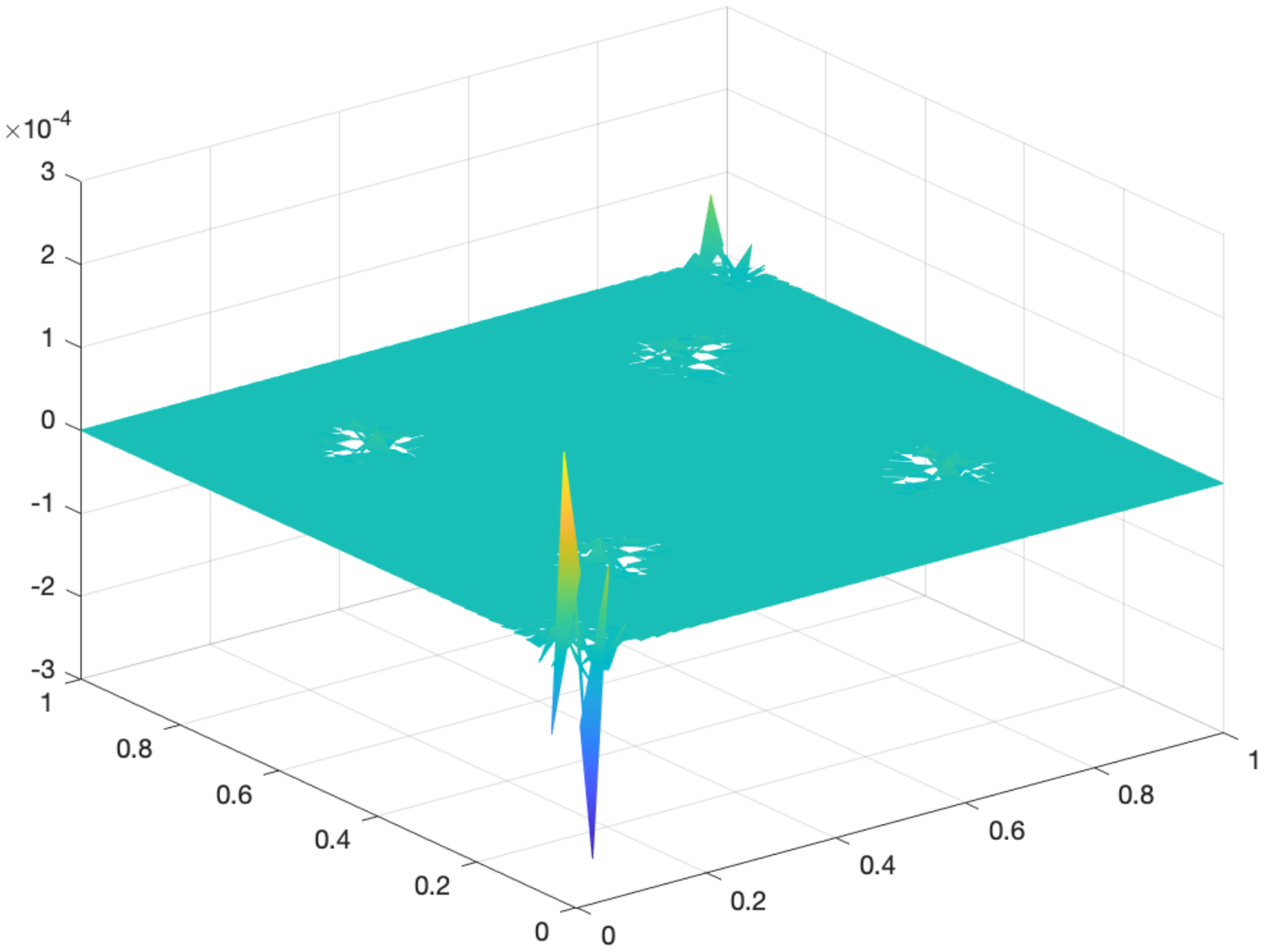}
  \includegraphics[width=.42\textwidth]{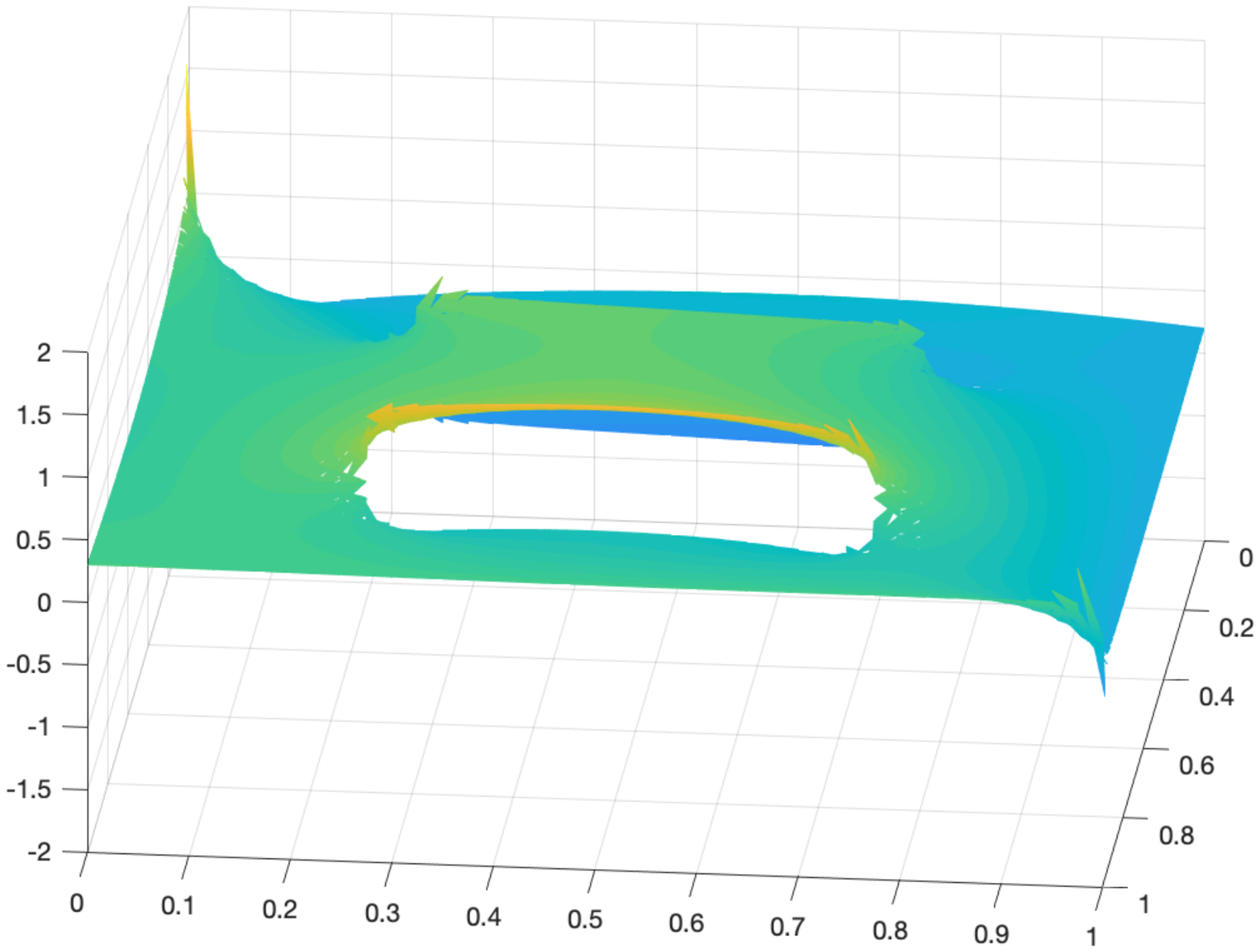}~
 \caption{Surface plot of the numerical solution $\lambda_h$ (left) and $u_h$ (right) calculated by the PDWG method of Example 7 with $f_1=f_2=0,k=2$ and $h=1/40$.}
  \label{fig:14}
\end{figure}

\bibliographystyle{abbrv}
\bibliography{Ref}


\end{document}